\crefname{hypothesis}{Hypothesis}{Hypotheses}
\title{
    Efficient Krylov methods for linear response
    in plane-wave electronic structure calculations%
\thanks{Submitted to the editors on DATE
\funding{MFH acknowledges support by the Swiss National Science Foundation
    (SNSF, Grant No.~221186) as well as the NCCR MARVEL, a National Centre of
    Competence in Research, funded by the SNSF (Grant No.~205602).
}}}
\author{
Michael F.~Herbst%
\thanks{Mathematics for Materials Modelling, Institute of
Mathematics \& Institute of Materials, École Polytechnique Fédérale de
Lausanne, 1015 Lausanne, Switzerland (\email{michael.herbst@epfl.ch})}
\and
Bonan Sun\footnotemark[2] \thanks{
    Max Planck Institute for Dynamics of Complex Technical Systems, 39106 Magdeburg, Germany
    (\email{bsun@mpi-magdeburg.mpg.de})}
}
\colorlet{dkgreen}{green!60!black}
\newcommand{\R}{\mathbb{R}}
\newcommand{\C}{\mathbb{C}}
\newcommand{\N}{\mathbb{N}}
\newcommand{\Z}{\mathbb{Z}}
\newcommand{\D}{\mathrm{d}}
\newcommand{\e}{\mathrm{e}}
\newcommand{\im}{\mathrm{i}}
\newcommand{\Real}{\operatorname{Re}}
\newcommand{\spann}{\operatorname{span}}
\newcommand{\range}{\operatorname{range}}
\newcommand{\dd}{\operatorname{d}}
\newcommand{\diag}{\operatorname{diag}}
\renewcommand{\vec}{\bm}
\newcommand{\Nocc}{N_{\mathrm{occ}}}
\newcommand{\Nocck}{N_{\mathrm{occ},\bm{k}}}
\newcommand{\Vext}{V_{\mathrm{ext}}} 
\newcommand{\Vxc}{V^{\mathrm{xc}}} 
\newcommand{\VH}{V^{\mathrm{H}}} 
\newcommand{\VHxc}{V^{\text{Hxc}}} 
\newcommand{\fFD}{f_{\mathrm{FD}}}
\newcommand{\epsilonF}{\epsilon_{\mathrm{F}}}
\newcommand{\FKS}{\mathcal{F}_{\mathrm{KS}}}
\newcommand{\KHxc}{K^{\mathrm{Hxc}}}
\newcommand{\La}{\mathbb{L}}
\newcommand{\Ecut}{E_{\mathrm{cut}}}
\newcommand{\G}{\mathbb{G}}
\newcommand{\X}{\mathbb{X}}
\newcommand{\Nb}{N_{\mathrm{b}}}
\newcommand{\Ng}{N_{\mathrm{g}}}
\newcommand{\Cd}{C_{\mathrm{d}}}
\newcommand{\taucg}{\tau^{\mathrm{CG}}}
\newcommand{\Nham}{N^{\mathrm{Ham}}}
\newcommand{\releta}{\eta_{\mathrm{rel}}}
\newcommand{\mpgrid}{\mathcal{G}_\text{MP}}
\newcommand{\Nmp}{N_{\text{MP}}}
\newcommand{\rend}{\bm r_\text{end}}
\newcommand{\xend}{\bm x_\text{end}}
\newcommand{\braket}[2]{\left\langle #1, #2 \right\rangle}
\newcommand{\abs}[1]{\left\vert #1 \right\vert}
\algnewcommand{\IIf}[1]{\State\algorithmicif\ #1\ \algorithmicthen}
\algnewcommand{\EndIIf}{\unskip\ }
\newcommand{\sbal}{\texttt{bal}\xspace}
\newcommand{\sagr}{\texttt{agr}\xspace}
\newcommand{\sgrt}{\texttt{grt}\xspace}
\newcommand{\sdten}{\texttt{D10}\xspace}
\newcommand{\sdhun}{\texttt{D100}\xspace}
\newcommand{\sdnorm}{\texttt{D10n}\xspace}
\newcommand{\sPbal}{\texttt{Pbal}\xspace}
\newcommand{\sPagr}{\texttt{Pagr}\xspace}
\newcommand{\sPgrt}{\texttt{Pgrt}\xspace}
\newcommand{\sPdten}{\texttt{PD10}\xspace}
\newcommand{\sPdhun}{\texttt{PD100}\xspace}
\newcommand{\sPdnorm}{\texttt{PD10n}\xspace}
\newcommand{\Hrho}{\mathcal{H}_\rho}
\newcommand{\HrhoB}{\bm{\mathcal{H}}_{\bm \rho}}
\newif\ifarXiv\arXivfalse
\newcommand{\supplement}{Appendix\xspace}
\newcommand{\supplement}{Supplementary Material\xspace}
\begin{document}

\maketitle
\begin{abstract}
We propose a novel algorithm based on inexact GMRES methods
for linear response calculations in density functional theory.
Such calculations require iteratively
solving a nested linear problem $\mathcal{E} \delta\rho = b$
to obtain the variation of the electron density $\delta \rho$.
Notably each application of the dielectric operator $\mathcal{E}$
in turn requires the iterative solution of multiple linear systems,
the Sternheimer equations.
We develop computable bounds to estimate the accuracy of the density variation
given the tolerances to which the Sternheimer equations have been solved.
Based on this result we suggest reliable strategies
for adaptively selecting the convergence tolerances of the Sternheimer equations,
such that each application of $\mathcal{E}$ is no more accurate than needed.
Experiments on challenging materials systems of practical relevance
demonstrate our strategies
to achieve superlinear convergence as well as a reduction
of computational time by about 40\%
while preserving the accuracy of the returned response solution.
Our algorithm seamlessly combines with standard preconditioning approaches
known from the context of self-consistent field problems
making it a promising framework for efficient response solvers
based on Krylov subspace techniques.
\end{abstract}

\begin{keywords}
    Electronic-structure theory,
    Linear response theory,
    Inexact Krylov subspace methods,
    Inexact GMRES,
    Nested linear problems,
    Density-functional theory,
    Density-functional perturbation theory
\end{keywords}
\begin{MSCcodes}
    65F10, %
    65Z05, %
    81V55, %
    81-08, %
    81Q15 %
\end{MSCcodes}

\section{Introduction}
First-principle simulations of the electronic structure are a crucial
ingredient to drive research in physics, chemistry, materials science and engineering.
Due to their favourable balance of cost and accuracy
Kohn-Sham density-functional theory (DFT) calculations
are particularly popular
and nowadays routinely run in the millions in scientific workflows%
~\cite{%
kirklin2015open,Chanussot2021}
to predict material properties. %
Notably experiments probe material properties by observing
how the material changes under a variation of external conditions,
such as an applied field or an imposed strain.
Computationally this can be modelled as the response of a material's ground state (GS)
to a perturbation of the external potential.
In this sense materials properties can be identified
with derivatives of the ground state
with respect to external parameters.
For example interatomic forces are obtained as the derivative of the GS energy versus nuclear positions
and polarisabilities as the second derivative of the energy with respect to the electric field.
Other properties (such as phonon or Raman spectra) require
even high-order energy derivatives~\cite{baroni2001phonons,rivano2024density,windl1993second}. %
Beyond physical observables more recent applications of DFT derivatives
include (a) the design of machine-learned~(ML) exchange-correlation functionals%
~\cite{%
Kirkpatrick2021,Kasim2021,Chen2021,Wu2023xcml}
--- where derivatives of the GS wrt.~the ML parameters are needed ---
and (b) the computation of \textit{a posteriori} error estimates~\cite{herbst2020posteriori,dusson2017posteriori}
--- where derivatives wrt.~the perturbation induced by considering a larger discretisation basis
are required.

The application of perturbation theory to the special case of DFT
--- with the goal to compute the aforementioned GS derivatives ---
is known as density functional perturbation theory (DFPT)~\cite{baroniGreenSfunctionApproach1987,
gonzeDensityfunctionalApproachNonlinearresponse1989,gonze1995adiabatic,Gonze1995}.
See~\cite{normanPrinciplesPracticesMolecular2018} for its applications in quantum chemistry,
\cite{baroni2001phonons} for the application to phonons in crystal systems,
and~\cite{Lin2017,cances2023numerical} for recent performance improvements
motivated by analytical study.
See also~\cite{cancesMathematicalPerspectiveDensity2014} for a mathematical analysis in the context
of the reduced Hartree-Fock model, a model closely related to DFT.

In particular the computation of the \textit{linear response} $\delta\rho$
to the electronic density under a perturbation $\delta V_0$ of the external
potential plays a pivotal role in DFPT:
due to Wigner's $(2n+1)$ rule computing the $2n$-th and $(2n+1)$-st
derivatives of the GS energy requires computing the first $n$ derivatives of the GS density~\cite{gonze1995adiabatic,Gonze1995}.
Therefore efficiently solving the linear response problem is required
for computing the \textit{second and all higher order} GS energy derivatives.

Computing the linear response mapping $\delta V_0 \mapsto \delta\rho$
requires solving the Dyson equation
\begin{equation}
    \mathcal{E} \delta \rho = \chi_0 \delta V_0.
    \label{eqn:dyson}
\end{equation}
It involves the independent particle susceptibility $\chi_0$ as well as
$\mathcal{E}$, the adjoint of the dielectric operator,
which itself also depends on $\chi_0$.
Crucially, a single application of $\chi_0$ (thus also $\mathcal{E}$)
requires itself solving $\Nocc$ linear systems,
the Sternheimer equations~(SE).
Here $\Nocc$,
the number of DFT eigenstates with non-zero occupation,
notably scales linearly with the number of electrons in the material.

In this work we consider in particular the setting of ``large basis sets'',
such as plane-wave discretisations, where both the Dyson equation as well as the SE
need to be solved with iterative methods.
A delicate issue for obtaining an efficient iterative solver for this
nested set of problems is the selection of the convergence criteria
when solving the inner SEs:
due to the sheer number of solves too tight a tolerance quickly drives up computational cost,
while on the other hand too loose a tolerance can lead to inaccurate responses $\delta\rho$.
Moreover for some materials (such as metals) both the Dyson equation
as well as the inner SEs can become poorly conditioned~\cite{cances2023numerical},
such that (a) tight convergence tolerances may be difficult to achieve
and (b) numerical noise may amplify.
These aspects illustrate why efficient response calculations
remain a challenge in practical electronic structure simulations~\cite{Gonze2024}
--- in particular for metals and for disordered materials (where $\Nocc$ is large).

Our main contribution in this work is an algorithm for solving
the Dyson equation~\eqref{eqn:dyson},
in which for each individual SE the convergence threshold
is adaptively selected to reduce overall computational cost.
This algorithm is backed up by a theoretical analysis~(Theorem~\ref{thm:inexact_gmres_dyson})
demonstrating that despite using inexact solves the algorithm still obtains
a solution to the Dyson equation within the prescribed accuracy.
A key ingredient is a novel rigorous bound for the error in the application
of the linear operator $\mathcal{E}$ resulting from inexact SE solves~(Lemma \ref{lem:cgtol_E}),
combined with the inexact GMRES framework of Simoncini and Szyld~\cite{simoncini2003theory}.
For the latter our work also develops an extension
to estimate the smallest singular value of the upper Hessenberg matrix
on the fly, which is generally applicable beyond the Dyson equation context.

Beyond our guaranteed bound we further develop a series of physically motivated approximations,
which enable additional savings in computational cost while
hardly impacting accuracy~(Section~\ref{sec:practical}).
We further analyse scaling of these strategies with increasing system size
to ensure that our results do not deteriorate %
when targeting larger materials.
In experiments on non-trivial material systems ---
a metallic supercell and a Heusler alloy material ---
our recommended \sbal (balanced) algorithm achieves a reduction in the computational
time of about 40\% compared to approaches with fixed SE tolerances of similar accuracy.

Our developments complement recent techniques to improve DFPT convergence.
Throughout this work we will employ the Schur complement approach suggested
in~\cite{cances2023numerical} for the SE. On metallic systems we additionally
employed the Kerker preconditioner within the inexact GMRES
for solving the Dyson equations --- leading to a speed-up of about a factor of $3$
if all techniques are combined.
Surprisingly, despite Kerker being widely established for accelerating
convergence towards the DFT ground state solution,
its use as a preconditioner
when solving the Dyson equation does not seem
to be widely established in the physics literature,
one exception being the discussion in~\cite{Gerhorst2024}.

While developed for the setting of plane-wave DFT we remark that our ideas can
be easily extended to related mean-field models such as
Hartree-Fock~\cite{lin2019math_electronic}
or the Gross-Pitaevskii equations (GPE)~\cite{bao2013mathematical}
due to the closely related mathematical structure~\cite{cances2010naofnep}.
Moreover our modifications to the inexact GMRES framework are general and 
could be applied to other nested linear problems.

The remainder of this paper is structured as follows.
In Section~\ref{sec:mathematical}, we review the mathematical formulation of DFT and DFPT
as well as its numerical discretisation in plane-wave basis.
In Section~\ref{sec:theory}, we derive computable error bounds quantifying how
inaccuracies in Sternheimer solutions propagate to the solution of the Dyson
equation as well as practical strategies for selecting SE convergence thresholds.
Numerical experiments confirming the efficiency gains of our algorithm
are presented in Section~\ref{sec:experiments}.

\section{Mathematical framework}
\label{sec:mathematical}

\subsection{Kohn-Sham density functional theory for periodic systems}
\label{sec:DFT}
We consider a simulation cell $\Omega = [0,1) \bm a_1 + [0,1) \bm a_2 + [0,1) \bm a_3$
for some not necessarily orthonormal basis $(\bm a_1, \bm a_2, \bm a_3)  \in \R^3$
and denote the associated periodic lattice in position space by
$\La = \Z \bm a_1 + \Z \bm a_2 + \Z \bm a_3$ as well as reciprocal lattice by
\begin{equation}
  \La^* = \Z \bm b_1 + \Z \bm b_2 + \Z \bm b_3, \quad \text{with} \quad \bm b_i \cdot \bm a_j = 2\pi \delta_{ij}, \quad i,j = 1,2,3.
\end{equation}
The space of locally square-integrable $\La$-periodic functions is given by
\begin{equation}
  L^2_{\Omega}(\R^3, \C) = \left\{ u: \R^3 \to \C \mid u(\bm r + \bm R) = u(\bm r), \ \forall \bm r \in \R^3, \bm R \in \La, \ \int_{\Omega} |u(\bm r)|^2 \dd \bm r < \infty \right\}
\end{equation}
endowed with the usual inner product $\langle\,\cdot\,,\,\cdot\,\rangle$.
We consider the case of a system containing an even $N \in \N$ number of electrons in a spin-paired state
at finite smearing temperature $T > 0$.
In atomic units the Kohn-Sham Hamiltonian reads as
\begin{equation}
    \Hrho = - \frac{1}{2}\Delta + \Vext(\vec{r}) + \VHxc_\rho(\vec{r}),
    \label{eqn:Hamiltonian}
\end{equation}
where $\Vext$ is the external potential modelling the interaction of the electrons with the environment
(e.g.~the potential generated by the nuclei of the material), and $\VHxc_\rho(\vec{r}) = \VH_\rho(\vec{r}) + \Vxc_\rho(\vec{r})$
is the Hartree-exchange-correlation potential modelling the electron-electron interactions.
Both $\VHxc_\rho$ and $\Vext$ are $\La$-periodic functions.
The Hartree potential $\VH_\rho$ is obtained as the unique zero-mean solution of the
periodic Poisson equation $-\Delta \VH_\rho(\vec{r}) = 4\pi \left( \rho(\vec{r}) - \frac1{|\Omega|} \int_\Omega \rho \D r \right)$
and $\Vxc_\rho$ is the exchange-correlation potential, both depending on $\rho$.
We remark that $\Hrho$ is a self-adjoint operator on $L^2_{\Omega}$ with domain $H^2_{\Omega}$, bounded below and with compact resolvent such that its spectrum is composed of a nondecreasing sequence of real eigenvalues
$(\epsilon_n)_{n\in\N}$ accumulating at $+\infty$. 
The DFT ground-state is defined by an $L^2_{\Omega}$-orthonormal
set of orbitals $(\phi_n)_{n\in\N}$ obtained as eigenfunctions of the Kohn-Sham Hamiltonian $\Hrho$
\begin{equation}
    \Hrho \phi_n(\bm r) = \epsilon_n \phi_n(\bm r), \quad
    \epsilon_1 \leq \epsilon_2 \leq \cdots, \quad
    \langle \phi_n, \phi_m \rangle = \delta_{nm}.
    \label{eq:KS}
\end{equation}

The ground-state density $\rho$ is defined in terms of the Kohn-Sham orbitals $(\phi_n)_{n\in\N}$ %
\begin{equation}
    \rho(\vec r) = \sum_{n=1}^{\infty} f_n \left| \phi_n(\bm r) \right|^2
    \qquad \text{with} \qquad
    f_n = \fFD\left(\dfrac{\epsilon_n - \epsilonF}{T}\right),
    \label{eqn:density}
\end{equation}
where $f_n \in [0, 2)$ are the occupation numbers and
$\fFD$ is the occupation function.
The Fermi level $\epsilonF$ is chosen as the real number such that
    $\int_\Omega \rho(\vec{r}) \,\D r = \sum_{n=1}^{\infty} f_n = N$.
For simplicity we consider the case of a monotonic smearing function,
such as the Fermi-Dirac function $\fFD(x) = 2/(1+e^x)$.

Since $\Hrho$ depends on $\rho$, which in turn depends on the eigenfunctions $\phi_n$ of $\Hrho$,
\eqref{eq:KS} is a non-linear eigenvalue problem, which is usually equivalently written as a
fixed-point problem in the density
\begin{equation}
    \label{eqn:fixedpoint}
    \rho = \FKS\left(\Vext + \VHxc_\rho \right).
\end{equation}
Here we introduced the potential-to-density map
\begin{equation}
    \label{eqn:potdensmap}
    \FKS: \quad V \mapsto \rho = \sum_{n=1}^{\infty} \fFD\left(\dfrac{\epsilon_n - \epsilonF}{T}\right) \left| \phi_n \right|^2,
\end{equation}
with $(\phi_n)_{n\in\N}$ being the $L^2_{\Omega}$-orthonormal eigenfunctions of $- \frac{1}{2}\Delta + V$
ordered by non-decreasing corresponding eigenvalues $(\epsilon_n)_{n\in\N}$.
The form \eqref{eqn:fixedpoint} of the non-linear eigenvalue problem \eqref{eq:KS}
is called self-consistent field (SCF) problem and is usually solved using
(preconditioned, accelerated) fixed-point iterations,
see~\cite{lin2019KSDFT,cances2021convergence,Herbst2022,cances2023KSmodels} for details.
Notably these methods require in each step the evaluation of $\FKS$ on a given iterate $\rho_n$,
which in turn implies a diagonalisation of $H_{\rho_n}$.
For a mathematical presentation of SCF methods see~\cite{cances2021convergence,lin2019math_electronic,cances2023KSmodels}
and consider~\cite{herbstBlackboxInhomogeneousPreconditioning2020,Herbst2022} for an in-depth discussion
of the practical challenges of obtaining robust SCF algorithms.

\subsection{Density functional perturbation theory (DFPT)}
\label{sec:dfpt}

Having solved the SCF problem \eqref{eqn:fixedpoint}
we obtain the DFT ground-state density $\rho$.
Notably \eqref{eqn:fixedpoint} and thus the solution $\rho$ depends on $\Vext$,
defining thus an implicit function $\Vext \mapsto \rho$
mapping from the external potential $\Vext$ to the self-consistent density $\rho$.
The goal of DFPT is to compute derivatives of this function,
which thus allows to study how the ground state density responds to perturbations
in the external conditions of the electrons
(displacement of nuclei, changes to the electromagnetic field, etc.).

Let $\delta V_0$ denote an infinitesimal local perturbation of $\Vext$,
i.e.~a perturbation that can be written as a multiplication operator
involving the periodic function $\vec{r} \mapsto \delta V_0(\vec{r})$.
We now compute the linearisation of \eqref{eqn:fixedpoint} using the chain rule
leading to the Dyson equation
\begin{equation}\label{eq:dyson_equation}
    \delta \rho = \underbrace{\FKS'(\Vext + \VHxc_\rho)}_{\chi_0}
        \left( \delta V_0 + \KHxc_\rho \delta \rho \right)
    \iff \left( 1 - \chi_0 \KHxc_\rho \right) \delta \rho = \chi_0 \delta V_0,
\end{equation}
where the independent-particle susceptibility $\chi_0$ is the derivative of $\FKS$
and Hartree-exchange-correlation kernel $\KHxc_\rho$ is the derivative
of $\rho \mapsto \VHxc_\rho$,
both evaluated at the self-consistent potential and density, respectively.
Given an infinitesimal perturbation $\delta V_0(\bm{r})$
the first-order approximation of the variation of the electron density $\delta \rho$
is thus obtained by solving the Dyson equation linear system \eqref{eq:dyson_equation}.
Notably the operator
\begin{equation}
    \mathcal{E} := 1 - \chi_0 \KHxc_\rho
\end{equation}
turns out to be the adjoint of the dielectric operator $1 - \KHxc_\rho \chi_0$%
~\cite{adler1962quantum,herbstBlackboxInhomogeneousPreconditioning2020}.
In the physics literature the dielectric operator is usually
given the symbol $\varepsilon$.
For notational simplicity we drop the explicit indication of the adjoint
in this work and employ the capitalised symbol
$\mathcal{E}$ to directly denote the dielectric \textit{adjoint}
as introduced above.

As we will detail in Section~\ref{sec:structure_dyson},
the Dyson equation \eqref{eq:dyson_equation} is typically solved iteratively,
such that an efficient application of both $\KHxc_\rho$ and $\chi_0$ is crucial.
The kernel $\KHxc_\rho = K^\text{H}_\rho + K^\text{xc}_\rho$
is the sum of two terms, the Coulomb kernel $K^\text{H}_\rho$
and the exchange-correlation kernel $K^\text{xc}_\rho$.
The former is diagonal in a plane-wave basis and the latter
can be applied using a real-space convolution, thus both are indeed efficient to treat.

In contrast, applying the independent particle
susceptibility $\chi_0 : \delta V \mapsto \delta\rho_0$ is considerably
more challenging
and --- if done exactly --- would in fact involve an infinite
sum over all eigenpairs $(\epsilon_n, \phi_n)$ of $\Hrho$.
However, typical smearing functions (such as $\fFD$) rapidly decay
to zero as $x\to\infty$.
This enables to truncate such infinite sums (which all involve factors $f_n$)
to only the finite set of occupied orbitals where $f_n > \epsilon_{\mathrm{occ}}$.
See~\cite{cances2023numerical} for an in depth discussion.
Denoting the number of occupied orbitals
by $\Nocc := \max\{n \in \N \mid f_n > \epsilon_{\mathrm{occ}}\}$
we can express the application of $\chi_0$ as
\begin{equation}\label{eq:chi0}
  \chi_0 \delta V
    = \sum_{n=1}^{\Nocc} 2 f_n \Real \big( \phi_n^* \, \delta \phi_n \big) + \delta f_n \, \left| \phi_n \right|^2
\end{equation}
where $\delta \phi_n$ is the change in the orbital $\phi_n$
induced by the perturbation $\delta V$ (detailed below)
and $\delta f_n$ is the change in occupation computed by
\begin{equation}\label{eq:delta_f}
    \delta f_n = f_n' \Big( \braket{\phi_n}{\delta V \phi_n} - \delta \epsilonF \Big).
\end{equation}
Here, $\delta \epsilonF$ is chosen such that $\sum_{n=1}^{\Nocc} \delta f_n = 0$
(charge conservation) and here and henceforth we use the convention
\begin{equation}\label{eq:fprime}
    f'_n \coloneqq \frac{f_n - f_n}{\epsilon_n - \epsilon_n} = \frac{1}{T}\fFD'\left(\frac{\epsilon_n - \epsilon_F}{T}\right).
\end{equation}

For obtaining $\delta \phi_n$ a few ways have been discussed in the literature
amounting to different gauge choices. Here we follow the
minimal gauge approach introduced in~\cite{cances2023numerical}
with the goal to obtain a numerically stable DFPT solution
also for metallic systems.
By a partition of unity with
\begin{equation}
    P = \sum_{n=1}^{\Nocc} | \phi_n \rangle \langle \phi_n |, \qquad\text{and}\qquad  Q = I - P = \sum_{n=\Nocc+1}^{\infty} | \phi_n \rangle \langle \phi_n |,
\end{equation}
--- respectively the orthogonal projectors onto the occupied subspace $\spann \{ \phi_n \}_{n=1}^{\Nocc}$
and unoccupied subspace $\spann \{ \phi_n \}_{n=\Nocc+1}^{\infty}$ ---
we decompose $\delta \phi_n$ into two contributions
$\delta \phi_n = P \delta \phi_n + Q \delta \phi_n$.
For each $n = 1,\ldots,\Nocc$ the first term is computed via an explicit sum over states
\begin{equation}\label{eq:delta_phi_P}
    \delta \phi_n^P := P \delta \phi_n = \sum_{m=1}^{\Nocc} \frac{f_n^2}{f_n^2 + f_m^2} \frac{f_n - f_m}{\epsilon_n - \epsilon_m} \braket{\phi_m}{\delta V \phi_n} \phi_m.
\end{equation}
while the second term is computed by solving the Sternheimer equations~(SE)
\begin{equation}
    Q \left(\Hrho - \epsilon_n \right) Q \delta \phi_n^Q = -Q \left(\delta V \phi_n\right)
    \label{eq:sternheimer_equation}
\end{equation}
for $\delta \phi_n^Q := Q \delta \phi_n$.
See~\cite{cances2023numerical}
for a detailed derivation of the above expressions.

\subsection{Numerical Discretisation}

\subsubsection{Plane-wave basis sets and Fourier transforms}\label{sec:plane_wave_basis}
In this section we provide a
description of plane-wave basis sets and Fourier transforms,
including key details that will be important for our development of
adaptive Sternheimer tolerance strategies.
A deeper theoretical or computational discussion of these standard results
can be found in~\cite{lin2019KSDFT,cances2022planewave}.

In periodic systems the unknown functions are $\La$-periodic and
thus can be approximated using a Fourier basis.
We use normalised plane waves, defined as
$e_{\bm G}(\bm r) = \e^{\im \bm G \cdot \bm r} / \sqrt{|\Omega|}$ where $|\Omega|$
is the volume of the unit cell $\Omega$ and $\bm G \in \La^*$.
To represent densities and orbitals to a consistent approximation
a larger basis set is employed for densities~\cite{cances2022planewave,lin2019KSDFT}.
We define the spherical Fourier basis $\mathcal{X}^{\bigcirc}$ (for orbitals)
and the cubic Fourier basis $\mathcal{X}^{\square}$ (for densities) by
\begin{equation}
    \label{eqn:fourier_bases}
  \begin{aligned}
    \mathcal{X}^{\bigcirc}&:= \spann \{ e_{\bm G} \mid \bm G \in \G^{\bigcirc} \} \ \text{with} \ \G^{\bigcirc} := \left\{ \bm G \in \La^* \mid \|\bm G\|_2 \leq \sqrt{2\Ecut} \right\},\\
    \mathcal{X}^{\square}&:= \spann \{ e_{\bm G} \mid \bm G \in \G^{\square} \} \ \text{with} \ \G^{\square} := \left\{ \bm G \in \La^* \mid \|\bm G\|_{\infty} \leq 2\sqrt{2\Ecut} \right\},
  \end{aligned}
\end{equation}
respectively. Here, the cut-off energy $\Ecut > 0$ is a user-defined parameter.
The dimension of these sets are
\begin{equation}\label{eq:Nb_Ng_definition}
  \begin{aligned}
    \Nb &:= \dim \mathcal{X}^{\bigcirc} = \left| \G^{\bigcirc} \right| = \Cd \dfrac{4 \pi}{3} \left( \sqrt{2\Ecut} \right)^3 \simeq \frac{\sqrt{2}}{3\pi^2} \Ecut^{3/2} \abs{\Omega}, \\
    \Ng &:= \dim \mathcal{X}^{\Box} = \left| \G^{\Box} \right| = \Cd \left( 4 \sqrt{2\Ecut} \right)^3 \simeq \frac{16\sqrt{2}}{\pi^3} \Ecut^{3/2} \abs{\Omega},
  \end{aligned}
\end{equation}
where $\Cd \simeq |\Omega|/(2\pi)^3$ is the (average) number of lattice points per unit volume in the reciprocal lattice $\La^*$. 

The discretised orbitals and electron density, denoted by the bold font $\bm \phi_n^{\bigcirc}$ and $\bm \rho^{\square}$, are vectors in $\C^{\Nb}$ and $\C^{\Ng}$ containing the Fourier coefficients
\begin{equation}
  \begin{gathered}
    \bm \phi_n^{\bigcirc} := \widehat{[\phi_n]}_{\bm G \in \G^{\bigcirc}} \in \C^{\Nb}, \ \phi_n(\bm r) \approx \sum_{\bm G \in \G^{\bigcirc}} \widehat{[\phi_{n}]}_{\bm G} e_{\bm G}(\bm r)\in \mathcal{X}^{\bigcirc}, \\
    \bm \rho^{\square} := \widehat{[\rho]}_{\bm G \in \G^{\square}} \in \C^{\Ng}, \ \rho(\bm r) \approx \sum_{\bm G \in \G^{\square}} \widehat{[\rho]}_{\bm G} e_{\bm G}(\bm r)\in \mathcal{X}^{\square},
  \end{gathered}
\end{equation}
respectively. Note that with our normalisation convention
the Fourier coefficients $\widehat{[u]}_{\bm G}$ of a function
$u(\bm r) \in L^2_{\Omega}(\R^3, \C)$ are obtained as
$\widehat{[u]}_{\bm G} = \langle e_{\bm G}, u \rangle_{L^2_{\Omega}(\R^3, \C)}$.
The reciprocal space grid $\G^{\Box}$ \eqref{eqn:fourier_bases}
can equivalently be expressed as
\begin{equation}
    \G^{\Box} = \left\{ i_1 \bm b_1 + i_2 \bm b_2 + i_3 \bm b_3 \mid -\frac{\Ng^x}{2} \leq i_1 < \frac{\Ng^x}{2}, -\frac{\Ng^y}{2} \leq i_2 < \frac{\Ng^y}{2}, -\frac{\Ng^z}{2} \leq i_3 < \frac{\Ng^z}{2} \right\},
\end{equation}
using even integers $\Ng^x, \Ng^y, \Ng^z$.
By means of the inverse Fourier transform~(FT) Fourier coefficients are mapped
to values on the respective real-space grid
\begin{equation}
    \X := \left\{ \dfrac{i_1}{\Ng^x} \bm a_1 + \dfrac{i_2}{\Ng^y} \bm a_2 + \dfrac{i_3}{\Ng^z} \bm a_3 \mid 0 \leq i_1 < \Ng^x, 0 \leq i_2 < \Ng^y, 0 \leq i_3 < \Ng^z \right\}.
\end{equation}
For example, denoting by $\bm W \in \C^{\Ng \times \Ng}$ the unitary discrete
Fourier transform matrix and introducing the weights
$w =  \sqrt{|\Omega|}/\sqrt{\Ng}$ the inverse FT is given by $w^{-1} \bf W^{-1}$.
It maps the Fourier coefficients $\bm \rho^{\Box}$
to function values $\rho(\bm r)$ on the grid $\X$.
The normalisation factor $w$ in this expression is chosen
such that the values obtained by an inverse FT are indeed
the unscaled function values.
Note, the value of the normalisation factor $w$ varies across different
software packages. The convention taken here is the one of DFTK%
~\cite{herbst2021dftk}.

To achieve the same transformation for the orbitals $\phi_n(\bm r)$
--- which are only represented on the (smaller) spherical grid $\G^{\bigcirc}$ ---
we first pad $\bm \phi_n^{\bigcirc}$ with zeros to get $\bm \phi_n^{\Box}$
and only then apply the inverse FT.
Specifically, let $\bm Z \in \C^{\Ng \times \Nb}$ be the zero-padding matrix,
the transformations from real space function values to the Fourier
coefficients on the spherical grid
(and vice versa) are given by: %
\begin{equation}\label{eq:fft_ifft}
  \begin{aligned}
    \bm F := w \bm Z^{\top} \bm W: \C^{\Ng} \to \C^{\Nb},               & \ [u(\bm r)]_{\bm r \in \X} \mapsto \widehat{[u]}_{\bm G \in \G^{\bigcirc}},  \\
    \bm F^{-1} := w^{-1} \bm W^{-1} \bm Z: \C^{\Nb} \to \C^{\Ng}, & \ \widehat{[u]}_{\bm G \in \G^{\bigcirc}} \mapsto [u(\bm r)]_{\bm r \in \X},
  \end{aligned}
\end{equation}
which can be implemented efficiently by leveraging the FFT algorithm.

\subsubsection{Notation for discretised quantities}
In this paper, we use the following conventions for denoting discretised quantities. 
In general, we use bold font of the same letter to denote the corresponding discretised object. 
Quantities related to orbitals ($\phi_n$, $\delta \phi_n$, $\Hrho$) are 
by default discretised in the spherical Fourier basis $\mathcal{X}_{\Ecut}^{\bigcirc}$ 
and quantities related to electron density and 
potentials ($\rho$, $\delta \rho$, $V$, $\delta V$, $\chi_0$) are by default 
discretised in real space $\X$. 
Therefore, we may omit the superscripts $\bigcirc$ and $\Box$ of the quantities 
in the following discussions for simplicity. 
See Figure~\ref{fig:discretisation} for a summary of notations. 
Note that if we let 
$\bm \Phi = \left[ \bm \phi_1, \cdots, \bm \phi_{\Nocc} \right] \in \C^{\Nb \times \Nocc}$ 
be the matrix of occupied orbitals, then the projectors $P$ and $Q$ can 
be written as $\bm P = \bm \Phi \bm \Phi^\mathsf{H}$ and $\bm Q = \bm I_{\Nb} - \bm P$
respectively. We also remark that the orbitals are \emph{orthonormal} 
in the Fourier basis, i.e., $\bm \Phi^\mathsf{H} \bm \Phi = \bm I_{\Nocc}$ but 
are only \emph{orthogonal} in the real space due to the normalisation factor $w$.

The corresponding operations $\Real$, $\odot$, $^*$, and $|\cdot|^2$ in
the following discrete settings should be understood component-wise. In particular,
$\odot$ denotes the element-wise product, known as the Hadamard product.

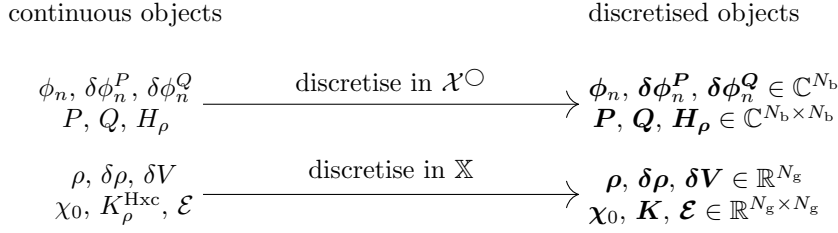
\begin{figure}
  \centering
  \begin{tikzpicture}
    \node[scale=1.0,anchor=east,align=center] (ti) at (0.5,1.2){
      continuous objects
    };
    \node[scale=1.0,anchor=west,align=center] (ti_dis) at (5,1.2){
      discretised objects
    };

    \node[scale=1.0,anchor=east,align=center] (ob) at (0,0){
      $\phi_n$, $\delta \phi_n^P$, $\delta \phi_n^Q$ 
      \\ $P$, $Q$, $\Hrho$
    };
    \node[scale=1.0,anchor=west,align=center] (ob_dis) at (5,0){
      $\bm \phi_n$, $\bm{\delta \phi^P}_n$, $\bm{\delta \phi^Q}_n \in \C^{\Nb} $
      \\ $\bm P$, $\bm Q$, $\HrhoB \in \C^{\Nb \times \Nb} $
    };
    \draw [-{>[scale=2]}] (ob) -- node[above=0.5mm]{discretise in $\mathcal{X}^{\bigcirc}$} (ob_dis);

    \node[scale=1.0,anchor=east,align=center] (ds) at (0,-1.2){
      $\rho$, $\delta \rho$, $\delta V$
      \\ $\chi_0$, $\KHxc_\rho$, $\mathcal{E}$
    };
    \node[scale=1.0,anchor=west,align=center] (ds_dis) at (5,-1.2){
      $\bm \rho$, $\bm{\delta \rho}$, $\bm{\delta V} \in \R^{\Ng}$
      \\ $\bm \chi_0$, $\bm K$, $\bm{\mathcal E} \in \R^{\Ng \times \Ng}$
    };
    \draw [-{>[scale=2]}] (ds) -- node[above=1mm] {{discretise in $\X$}} (ds_dis);

  \end{tikzpicture}
  \caption{Discretisation of continuous objects in the plane-wave basis sets.}
  \label{fig:discretisation}
\end{figure}

\subsubsection{Discretised Kohn-Sham equations}
Given the above setup, the discretised Kohn-Sham equations (c.f. \eqref{eq:KS}) can be written as
\begin{equation}\label{eq:discretised_KS}
  \begin{gathered}
    \HrhoB \bm \phi_n = \epsilon_n \bm \phi_n, \ n=1,\cdots, \Nocc, \ \langle \bm \phi_n, \bm \phi_m \rangle = \delta_{nm}, \ \bm \rho = \sum_{n=1}^{\Nocc} f_n | \bm F^{-1} \bm \phi_n |^2,
  \end{gathered}
\end{equation}
where for the computation of $\bm \rho$, we first transform the orbitals to
real space by the inverse FT and then sum up the modulus squares of the
transformed orbitals. This non-linear eigenvalue problem is usually solved
by the \emph{self-consistent field} (SCF) iterations yielding $\Nocc$ eigenpairs
\begin{equation}
    \left(\epsilon_n,  \bm \phi_n \right) \in \R \times \C^{\Nb} \quad \text{for} \quad n=1,\cdots, \Nocc,
\end{equation}
as well as the electron density $\bm \rho \in \R^{\Ng}$ and the Hamiltonian matrix $\HrhoB \in \C^{\Nb \times \Nb}$.

\subsubsection{Discretised Dyson equation}
With the Kohn-Sham equations \eqref{eq:discretised_KS} solved and all these quantities 
at hand we can write down the discretised Dyson equation as (c.f. \eqref{eq:dyson_equation})
\begin{equation}\label{eq:dyson_discretised}
  \text{Seek } \bm{\delta \rho} \in \R^{\Ng} \text{ such that } \bm{\mathcal E} \bm{\delta \rho} = \bm{\delta \rho}_0,
\end{equation}
where $\bm{\mathcal E} := \bm I_{\Ng} - \bm \chi_0 \bm K \in \R^{\Ng \times \Ng}$, $\bm{\delta \rho}_0 := \bm \chi_0 \bm{\delta V_0} \in \R^{\Ng}$. The dielectric adjoint $\bm{\mathcal E}$ is accessed through the matrix-vector product
(c.f. \eqref{eq:chi0})
\begin{equation}\label{eq:op_discretised}
    \bm{\mathcal E} \bm v =\bm v - \|\bm K \bm v\| \sum_{n=1}^{\Nocc} 2 f_n \Real \Big((\bm F^{-1} \bm{\phi}_n)^* \odot ( \bm F^{-1} \bm{\delta \phi^P}_n + \bm F^{-1} \bm{\delta \phi^Q}_n ) \Big) + \delta f_n \left| \bm F^{-1} \bm{\phi}_n \right|^2,
\end{equation}
where for $n=1,\cdots, \Nocc$, occupied variations $\bm{\delta \phi^P}_n \in \C^{\Nb}$ are computed explicitly by a sum over states (c.f. \eqref{eq:delta_phi_P})
\begin{equation}
    \label{eq:sumstateschi}
  \bm{\delta \phi^P}_n = \sum_{m=1}^{\Nocc} \frac{f_n^2}{f_n^2 + f_m^2} \frac{f_n - f_m}{\epsilon_n - \epsilon_m} \braket{\bm\phi_m}{\bm F \left( \left(\bm K \bm v\right) \odot \left( \bm F^{-1} \bm{\phi}_n \right) \right)} \bm\phi_m / \|\bm K \bm v\|,
\end{equation}
the unoccupied variations $\bm{\delta \phi^Q}_n \in \C^{\Nb}$ are computed by solving the discretised SE (c.f. \eqref{eq:sternheimer_equation})
\begin{equation}
    \label{eq:dicretized_sternheimer_equation}
  \bm Q \left(\HrhoB - \epsilon_n \bm I_{\Nb} \right) \bm Q \bm{\delta \phi^Q}_n = -\bm Q \bm F \left( \left(\bm K \bm v\right) \odot \left( \bm F^{-1} \bm{\phi}_n \right) \right)/\|\bm K \bm v\|,
\end{equation}
and the occupation changes $\delta f_n$ are computed by %
$\delta f_n = f_n' \left( \delta \epsilon_n - \delta \epsilonF \right)$
with $\delta \epsilon_n = \braket{\bm\phi_n}{\bm F \left( \left(\bm K \bm
v\right) \odot \left( \bm F^{-1} \bm{\phi}_n \right) \right)}/ \|\bm K \bm v\|$
and $\delta \epsilonF$, $f'_n$ as described in Section~\ref{sec:dfpt}.

Note, that this formulation exploits the linearity of $\chi_0$ to rewrite
$\bm{\mathcal E} \bm v
= \bm v - \|\bm K \bm v\| \bm \chi_0 \left( \bm K \bm v / \|\bm K \bm v\| \right)$
explaining the appearance of terms $\|\bm K \bm v\|$ in the above expressions.
This controls the size of the
RHS of \eqref{eq:dicretized_sternheimer_equation}
as well as the terms
in the sum over states formula \eqref{eq:sumstateschi},
avoiding in particular underflow for small $\bm K \bm v$.
We find this rescaling to improve overall
accuracy of the computed orbital variations
$\bm{\delta \phi}_n = \bm{\delta \phi^P}_n + \bm{\delta \phi^Q}_n$.

\subsubsection{Structure of the Dyson equation and typical numerical algorithms}\label{sec:structure_dyson}
Recall that $\Ng \approx 16 \Nb$ (see \eqref{eq:Nb_Ng_definition}). Therefore
in plane-wave bases the Dyson equation is a large $\Ng \times \Ng$
\emph{real non-symmetric} linear system,
while the SE is a \emph{complex Hermitian} linear system
with a smaller size $\Nb \times \Nb$.
Both the dielectric adjoint $\bm{\mathcal{E}}$ and the Hamiltonian
$\HrhoB$ are not explicitly formed
and are only accessed through matrix-vector products
to benefit from an efficient FFT-based evaluation.
Both the Dyson and Sternheimer equations are thus solved using iterative methods.

We use the generalised minimal residual (GMRES) method~\cite{saad1986gmres} for
the Dyson equation and the conjugate gradient (CG)~\cite{hestenes1952cg} method
for the SEs.
This results in a nested problem:
at the $i$-th GMRES iteration, an application of $\bm{\mathcal{E}}$ to a Krylov
basis vector is required, which in turn requires the iterative solution
of $\Nocc$ SEs by CG with some tolerance $\taucg_{i,n}$
for $n = 1,\cdots, \Nocc$.
Selecting the appropriate CG tolerances is crucial for the efficiency
and accuracy of the solution of the Dyson equation, which will be discussed
in the following sections.

\section{An inexact preconditioned GMRES method for the Dyson equation}
\label{sec:theory}
\subsection{Why a naive static tolerance selection can fail}
\label{sec:gmres_fail}
We first illustrate the deficiencies of a naive GMRES approach,
which solves the Dyson equation $\bm{\mathcal E} \bm{\delta \rho} = \bm{\delta \rho}_0$
employing simply a static threshold $\taucg_{i,n}$ for the inner SE problems.
Given an initial guess $\bm x_0 \in \R^{\Ng}$ to the solution $\bm{\delta \rho}$,
GMRES generates a sequence of approximations $\bm x_i = \bm x_0 + \bm V_i \bm y_i$ for $i = 1, \ldots, m$,
such that $\bm y_i$ minimises the $\ell_2$ norm of the
residual \mbox{$\bm r_i := \bm{\delta \rho}_0 - \bm{\mathcal E} \bm x_i$}.
Here, $m \ll \Ng$ is the maximal subspace size for GMRES,
$\bm V_i = \left[ \bm v_1, \bm v_2, \cdots, \bm v_i \right] \in \R^{\Ng \times i}$ is the
orthonormal basis of the Krylov subspace $\mathcal{K}_i(\bm{\mathcal E}, \bm
b)$ and satisfies the Arnoldi decomposition
$\bm{\mathcal E} \bm V_i = \bm V_{i+1} \bm H_i$
where $\bm H_i \in \R^{(i+1) \times i}$ is an upper Hessenberg matrix.
$\bm y_i \in \R^i$ are the coefficients in the basis $\bm V_i$,
determined by solving the $\Ng \times i$ least squares (LS) problem 
$\bm y_i := \min_{\bm y \in \R^i} \| \bm{\delta \rho}_0 - \bm{\mathcal E} (\bm x_0 + \bm V_i \bm y) \|$,
which due to the Arnoldi decomposition
is equivalent to the $(i+1) \times i$ LS problem
\begin{equation}\label{eq:small_ls}
  \bm y_i = \min_{\bm y \in \R^i} \| \beta \bm e_1 - \bm H_i \bm y \|,
\end{equation}
where $\beta = \| \bm r_0 \|$ and $\bm e_1$ is the first column of the
conformable identity matrix. Let the \emph{estimated residual} be $\widetilde{\bm r}_i := \beta \bm e_1 - \bm H_i \bm y_i$ then GMRES terminates when $\|\widetilde{\bm r}_i\|$ is below a tolerance $\tau$ since $\| \bm r_i \| = \|
\widetilde{\bm r}_i \| \leq \tau$.
However, the first equality is only true when \emph{all} the
matrix-vector products $\bm{\mathcal E} \bm v_i$ are computed \emph{exactly}.
In our nested linear problem setting this is not the case as we only solve
the inner SEs to finite tolerance. We make this explicit by denoting
the inexact matrix-vector product as
$\widetilde{\bm{\mathcal E}}^{(i)} \bm v_i \approx \bm{\mathcal E} \bm v_i$ at the $i$-th GMRES iteration.
As a consequence only the \emph{inexact} Arnoldi decomposition
\begin{equation}\label{eq:inexact_arnoldi_decomposition}
    \big[ \widetilde{\bm{\mathcal E}}^{(1)} \bm v_1, \widetilde{\bm{\mathcal E}}^{(2)} \bm v_2, \cdots, \widetilde{\bm{\mathcal E}}^{(i)} \bm v_i \big] = \bm V_{i+1} \bm H_i
\end{equation}
holds during the GMRES, such that $\| \bm r_i \| \neq \| \widetilde{\bm r}_i \|$.
The usual termination criterion $\| \widetilde{\bm r}_i \| \leq \tau$ may thus \emph{not} guarantee $\| \bm r_i \| \leq \tau$.

\begin{figure}
    \begin{center}
    \begin{overpic}[width=0.9\textwidth]{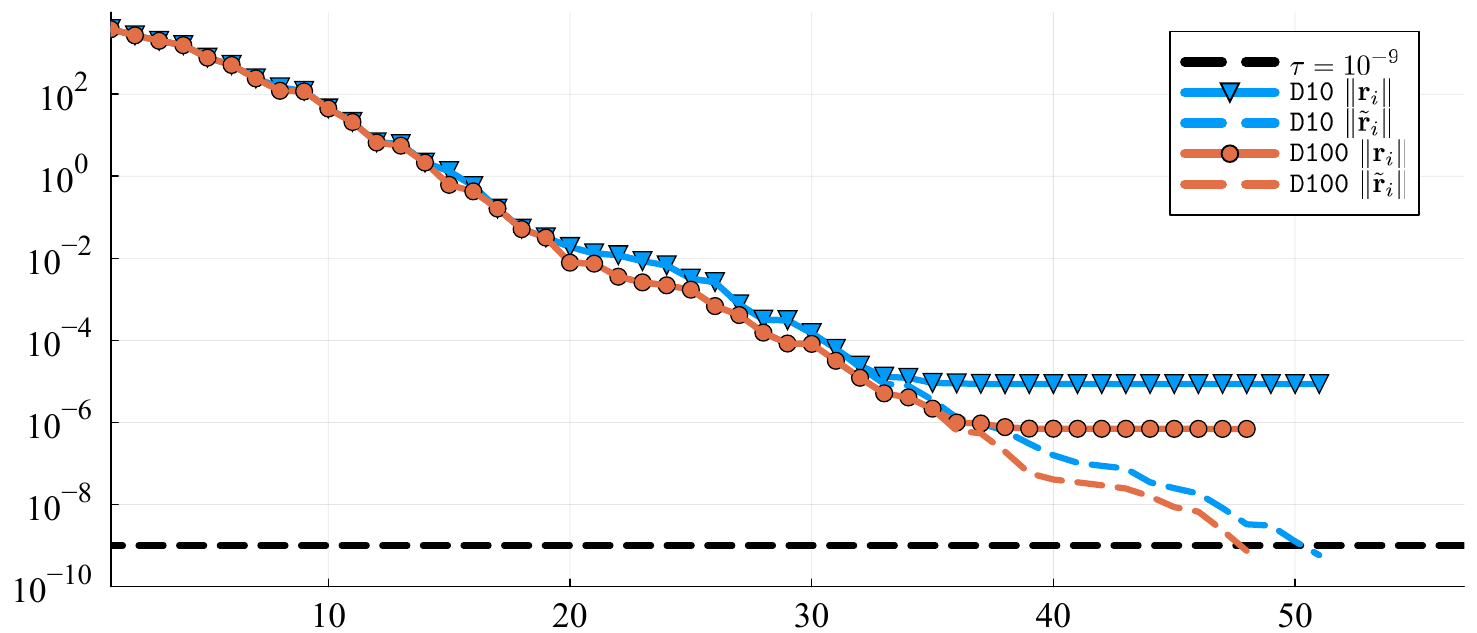}
    \put(35,-2.5){GMRES iteration number $i$}
    \put(-2,5){\rotatebox{90}{true or estimated residuals}}
  \end{overpic}
    \end{center}
  \vspace*{2mm}
  \caption{True residuals $\|\bm r_i\|$ (solid lines) and estimated
  residuals $\|\tilde{\bm r}_i\|$ (dashed lines) v.s. GMRES
  iteration for system in Section~\ref{sec:structure_dyson}. The baseline strategies
  $\taucg_{i,n} = \tau/10,\ \tau/100$ (\sdten, \sdhun) are used for the CG
  solvers for the SEs.}
  \label{fig:true_est_res_diff}
\end{figure}
This is illustrated in Figure~\ref{fig:true_est_res_diff}
for solving the Dyson equation (with no preconditioner)
for an aluminium bulk system with $40$ atoms,
see Section~\ref{sec:Al} for details of the experimental setup.
We consider two baseline strategies \sdten and \sdhun,
in which we set the tolerances of the CG solvers for all SEs
to be the desired GMRES accuracy $\tau$
divided by a constant factor $10$ or $100$, respectively.
In the notation of Section~\ref{sec:structure_dyson} we thus set
$\taucg_{i,n} = \tau/10$ or $\taucg_{i,n} = \tau/100$.
Both the estimated norms $\| \widetilde{\bm r}_i \|$
(accessible during the GMRES procedure) and the true residual norms $\| \bm r_i \|$
(computed explicitly by using tight SE tolerances $\taucg_{i,n} = 10^{-16}$)
are shown in Figure~\ref{fig:true_est_res_diff} for $\tau = 10^{-9}$.
Initially the estimated residuals match the true residuals.
However, around iteration 35 we observe a deviation of both quantities:
while the estimated residuals keep decreasing, the true residuals stagnate.
At convergence, i.e.~when $\| \widetilde{\bm r}_i \| \leq \tau$,
the returned solution turns out to be at least $3$ orders of magnitude
less accurate than $\tau$.

In this work we will develop adaptive strategies for choosing the CG tolerances $\taucg_{i,n}$,
which will not only cure this deficiency, but also bring computational savings
compared to choosing $\taucg_{i,n}$ statically across all GMRES iterations.

\subsection{Theoretical analysis of the inexact GMRES method for the Dyson equation}\label{sec:inexact_gmres}
Krylov subspace methods with inexact matrix-vector products have been studied
in the literature, see~\cite{simoncini2003theory,jasper2004inexact}.
A key result is the following lemma,
which provides a bound of the true residual $\| \bm r_m \|$
by the estimated residual $\| \widetilde{\bm r}_m \|$
and the error in the inexact matrix-vector products
$\big(\widetilde{\bm{\mathcal E}}^{(i)} - \bm{\mathcal E}\big) \bm v_i$ for $i=1,\ldots, m$.

\begin{lemma} \label{lem:inexact_residual_norm}
  Given the notations developed above, we have
  \begin{equation}\label{eq:inexact_residual_norm}
      \| \bm r_m \| \leq \| \bm E_0 \bm x_0 \| + \| \widetilde{\bm r}_m \| + \sum_{i=1}^m \| \bm E_i \bm v_i \| \ \big| \left[ \bm y_m \right]_i \big|,
  \end{equation}
  where $\bm E_i := \widetilde{\bm{\mathcal E}}^{(i)} - \bm{\mathcal E}$ is the error matrix at the $i$-th GMRES iteration and $\left[ \bm y_m \right]_i$ is the $i$-th element of the vector $\bm y_m$ defined in \eqref{eq:small_ls}. 
\end{lemma}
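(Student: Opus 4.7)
The plan is to derive an exact identity for $\bm r_m$ in which three contributions are isolated: the quasi-orthogonal image $\bm V_{m+1}\widetilde{\bm r}_m$ of the estimated residual, the initial matvec error, and a weighted sum of the per-iteration errors. The bound then follows by the triangle inequality and orthonormality of $\bm V_{m+1}$. The main work is purely algebraic bookkeeping, centred on substituting the inexact Arnoldi relation \eqref{eq:inexact_arnoldi_decomposition} at the right place.

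First I would split the true residual as
\begin{equation*}
\bm r_m = \bm{\delta\rho}_0 - \bm{\mathcal E}\bm x_0 - \bm{\mathcal E}\bm V_m \bm y_m,
\end{equation*}
and treat the two subtracted terms separately. For the first, letting $\widetilde{\bm r}_0 = \bm{\delta\rho}_0 - \widetilde{\bm{\mathcal E}}^{(0)}\bm x_0 = \beta\bm v_1$ denote the computed initial residual (with $\bm E_0 := \widetilde{\bm{\mathcal E}}^{(0)} - \bm{\mathcal E}$), I would write
\begin{equation*}
\bm{\delta\rho}_0 - \bm{\mathcal E}\bm x_0 = \widetilde{\bm r}_0 + \bm E_0\bm x_0 = \beta\bm V_{m+1}\bm e_1 + \bm E_0\bm x_0,
\end{equation*}
using that $\bm v_1$ is the first column of $\bm V_{m+1}$. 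For the second, I would expand column-wise using $\bm{\mathcal E}\bm v_i = \widetilde{\bm{\mathcal E}}^{(i)}\bm v_i - \bm E_i\bm v_i$, sum against the coefficients $[\bm y_m]_i$, and then apply \eqref{eq:inexact_arnoldi_decomposition} to collapse the inexact matvecs onto $\bm V_{m+1}\bm H_m$, giving
\begin{equation*}
\bm{\mathcal E}\bm V_m\bm y_m = \bm V_{m+1}\bm H_m\bm y_m - \sum_{i=1}^m [\bm y_m]_i\,\bm E_i\bm v_i.
\end{equation*}

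Substituting both identities back and recognising $\widetilde{\bm r}_m = \beta\bm e_1 - \bm H_m\bm y_m$ yields the exact decomposition
\begin{equation*}
\bm r_m = \bm V_{m+1}\widetilde{\bm r}_m + \bm E_0\bm x_0 + \sum_{i=1}^m [\bm y_m]_i\,\bm E_i\bm v_i.
\end{equation*}
Taking norms, applying the triangle inequality, and exploiting $\|\bm V_{m+1}\widetilde{\bm r}_m\| = \|\widetilde{\bm r}_m\|$ (orthonormal columns) delivers \eqref{eq:inexact_residual_norm}. There is no real obstacle; the only point requiring care is the treatment of the zeroth step, where one must decide whether $\bm r_0$ is formed with a genuinely exact matvec (in which case $\bm E_0 = 0$ and that term drops) or, as reflected by the statement of the lemma, with the same inexact operator used thereafter, so that $\bm E_0\bm x_0$ must be carried along. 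Keeping the indexing of $\bm E_i$ consistent with \eqref{eq:inexact_arnoldi_decomposition} makes the identity transparent.
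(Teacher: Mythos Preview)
Your proposal is correct and follows essentially the same approach as the paper: both derive the exact identity $\bm r_m = \bm V_{m+1}\widetilde{\bm r}_m + \bm E_0\bm x_0 + \sum_{i=1}^m [\bm y_m]_i\,\bm E_i\bm v_i$ by substituting the inexact Arnoldi relation and the definition of the computed initial residual, then conclude via the triangle inequality and orthonormality of $\bm V_{m+1}$. Your column-wise expansion is just a notational variant of the paper's matrix form $[\bm E_1\bm v_1,\ldots,\bm E_m\bm v_m]\bm y_m$, and your remark on the zeroth step correctly anticipates how the paper handles it.
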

This result is a restatement of~\cite[Proposition 4.1]{simoncini2003theory} adapted to our notation.
The behaviour of $\left| \left[ \bm y_m \right]_i \right|$ is also well-understood
and can in fact be bounded by the estimated residual of the previous iteration
$\| \widetilde{\bm r}_{i-1} \|$:
\begin{lemma}[\cite{simoncini2003theory}, Lemma 5.1] \label{lem:gmres_y_k_bound}
  Given $\bm y_m$ as in \eqref{eq:small_ls}, we have
    \begin{equation} \label{eq:gmres_y_k_bound}
        \left| \left[ \bm y_m \right]_i \right| \leq \dfrac{1}{\sigma_{m}(\bm H_m)} \| \widetilde{\bm r}_{i-1} \|, \quad \text{for } i=1,2,\cdots, m,
    \end{equation}
    where $\sigma_{m}(\cdot)$ is the $m$-th singular value of a matrix sorted
    in descending order.
\end{lemma}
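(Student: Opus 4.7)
The plan is to leverage the upper-Hessenberg block structure of $\bm H_m$ together with the orthogonality of the GMRES residual against the Krylov image, in order to relate $[\bm y_m]_i$ back to the estimated residual at the earlier step $i-1$.

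First, I would introduce the padded extension $\hat{\bm y}_{i-1} := \begin{bmatrix} \bm y_{i-1} \\ \bm 0_{m-i+1} \end{bmatrix} \in \R^m$ of the iterate $\bm y_{i-1} \in \R^{i-1}$ defined by the LS problem at step $i-1$. Critically $[\hat{\bm y}_{i-1}]_i = 0$, so
\begin{equation*}
    [\bm y_m]_i = [\bm y_m - \hat{\bm y}_{i-1}]_i,
    \qquad \text{hence} \qquad
    \bigl|[\bm y_m]_i\bigr| \leq \bigl\|\bm y_m - \hat{\bm y}_{i-1}\bigr\|.
\end{equation*}
This shifts the task to estimating the norm of $\bm y_m - \hat{\bm y}_{i-1}$.

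Next, I would exploit the Hessenberg structure of $\bm H_m$. Partitioning at column $i-1$ and row $i$ gives
$\bm H_m = \bigl[\begin{smallmatrix} \bm H_{i-1} & \bm B \\ \bm 0 & \bm C \end{smallmatrix}\bigr]$,
since $[\bm H_m]_{jk}=0$ whenever $j > k+1$ forces the bottom-left block to vanish. Applying this to $\hat{\bm y}_{i-1}$ yields $\bm H_m \hat{\bm y}_{i-1} = \bigl[\begin{smallmatrix} \bm H_{i-1} \bm y_{i-1} \\ \bm 0 \end{smallmatrix}\bigr]$, so that $\beta \bm e_1 - \bm H_m \hat{\bm y}_{i-1} = \bigl[\begin{smallmatrix} \widetilde{\bm r}_{i-1} \\ \bm 0 \end{smallmatrix}\bigr]$. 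Subtracting the identity $\beta \bm e_1 - \bm H_m \bm y_m = \widetilde{\bm r}_m$ gives
\begin{equation*}
    \bm H_m \bigl(\bm y_m - \hat{\bm y}_{i-1}\bigr) = \begin{bmatrix} \widetilde{\bm r}_{i-1} \\ \bm 0 \end{bmatrix} - \widetilde{\bm r}_m.
\end{equation*}
Since $\bm H_m$ has full column rank (assuming no breakdown) and its pseudoinverse has norm $1/\sigma_m(\bm H_m)$, this already yields $\|\bm y_m - \hat{\bm y}_{i-1}\| \leq \sigma_m(\bm H_m)^{-1} \bigl\|\bigl[\begin{smallmatrix}\widetilde{\bm r}_{i-1} \\ \bm 0\end{smallmatrix}\bigr] - \widetilde{\bm r}_m\bigr\|$.

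The final and cleanest step is to sharpen this estimate from $\|\widetilde{\bm r}_{i-1}\| + \|\widetilde{\bm r}_m\|$ down to just $\|\widetilde{\bm r}_{i-1}\|$, which is where the main subtlety lies. The key observation is that the GMRES optimality condition forces $\widetilde{\bm r}_m$ to be orthogonal to $\mathrm{range}(\bm H_m)$, and in particular orthogonal to $\bm H_m(\bm y_m - \hat{\bm y}_{i-1}) = \bigl[\begin{smallmatrix}\widetilde{\bm r}_{i-1}\\ \bm 0\end{smallmatrix}\bigr] - \widetilde{\bm r}_m$. The Pythagorean decomposition $\bigl[\begin{smallmatrix}\widetilde{\bm r}_{i-1}\\ \bm 0\end{smallmatrix}\bigr] = \widetilde{\bm r}_m + \bigl(\bigl[\begin{smallmatrix}\widetilde{\bm r}_{i-1}\\ \bm 0\end{smallmatrix}\bigr] - \widetilde{\bm r}_m\bigr)$ into orthogonal components then gives $\bigl\|\bigl[\begin{smallmatrix}\widetilde{\bm r}_{i-1}\\ \bm 0\end{smallmatrix}\bigr] - \widetilde{\bm r}_m\bigr\|^2 = \|\widetilde{\bm r}_{i-1}\|^2 - \|\widetilde{\bm r}_m\|^2 \leq \|\widetilde{\bm r}_{i-1}\|^2$, which combined with the pseudoinverse bound closes the proof. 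I expect identifying and justifying this orthogonality argument (rather than the cruder triangle-inequality bound that would introduce a factor of 2) to be the main obstacle, together with the bookkeeping required to justify the block partition of $\bm H_m$ rigorously from the Hessenberg zero pattern.
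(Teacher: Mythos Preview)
Your proof is correct and rests on the same core ingredients as the paper's: the padded vector $\hat{\bm y}_{i-1}$, the Hessenberg block structure giving $\bm H_m\hat{\bm y}_{i-1}=\bigl[\begin{smallmatrix}\bm H_{i-1}\bm y_{i-1}\\ \bm 0\end{smallmatrix}\bigr]$, and the pseudoinverse bound $\|\bm H_m^\dagger\|=1/\sigma_m(\bm H_m)$. The difference is in the final estimate. You bound $|[\bm y_m]_i|\le\|\bm y_m-\hat{\bm y}_{i-1}\|$, then control this via $\|\bm H_m(\bm y_m-\hat{\bm y}_{i-1})\|=\bigl\|\bigl[\begin{smallmatrix}\widetilde{\bm r}_{i-1}\\ \bm 0\end{smallmatrix}\bigr]-\widetilde{\bm r}_m\bigr\|$, and finally invoke the GMRES orthogonality $\widetilde{\bm r}_m\perp\mathrm{range}(\bm H_m)$ with Pythagoras to drop the $\widetilde{\bm r}_m$ contribution. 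The paper instead writes directly $[\bm y_m]_i=\bm e_i^*\bm H_m^\dagger\bigl[\begin{smallmatrix}\widetilde{\bm r}_{i-1}\\ \bm 0\end{smallmatrix}\bigr]$ and bounds by $\|\bm H_m^\dagger\|\,\|\widetilde{\bm r}_{i-1}\|$ in one line; the orthogonality you single out as ``the main obstacle'' is absorbed automatically into the pseudoinverse (since $\bm H_m^\dagger\widetilde{\bm r}_m=0$) and never needs to be made explicit. Both routes yield the identical constant, but the paper's is shorter and avoids the Pythagorean detour you anticipated.
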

For completeness we provide proofs of both results in \supplement \ref{supp:proofs}.
Notably a simpler proof of Lemma~\ref{lem:gmres_y_k_bound} compared to the original one in~\cite{simoncini2003theory} is given.
Combining Lemmas \ref{lem:inexact_residual_norm} and \ref{lem:gmres_y_k_bound}
immediately yields the following theorem.
\begin{theorem}\label{thm:inexact_gmres}
    For any tolerance $\tau >0$,
    if the error matrices $\bm E_0, \cdots, \bm E_m$ satisfy
    \begin{equation}\label{eq:inexact_gmres_bound}
        \| \bm E_0 \bm x_0 \| \leq c_0\tau,\quad \|\bm E_i \bm v_i\| \leq c_i \dfrac{\sigma_m(\bm H_m)}{\|\widetilde{\bm r}_{i-1}\|}\tau, \quad \text{for} \quad i=1,2,\cdots, m,
    \end{equation}
    and the estimated residual norm is such that $\| \widetilde{\bm r}_m \|\leq c_{m+1}\tau$, for some constants $c_0, \cdots, c_{m+1} > 0$ with
  $\sum_{i=0}^{m+1} c_i \leq 1$, then the method terminates at the $m$-th iteration with the accuracy $\tau$, i.e., $\|\bm r_m\| \leq \tau$.
\end{theorem}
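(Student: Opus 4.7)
The plan is to combine Lemmas~\ref{lem:inexact_residual_norm} and~\ref{lem:gmres_y_k_bound} directly with the hypothesis~\eqref{eq:inexact_gmres_bound}; the content of the theorem is essentially that these two lemmas interlock cleanly once one chooses the right inverse weighting for $\|\bm E_i \bm v_i\|$.

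First, I would invoke Lemma~\ref{lem:inexact_residual_norm} to get
\[
    \| \bm r_m \| \leq \| \bm E_0 \bm x_0 \| + \| \widetilde{\bm r}_m \| + \sum_{i=1}^{m} \| \bm E_i \bm v_i \| \, \bigl| [\bm y_m]_i \bigr|.
\]
Then I would bound each coefficient $|[\bm y_m]_i|$ using Lemma~\ref{lem:gmres_y_k_bound}, yielding
\[
    \| \bm r_m \| \leq \| \bm E_0 \bm x_0 \| + \| \widetilde{\bm r}_m \| + \frac{1}{\sigma_m(\bm H_m)}\sum_{i=1}^{m} \| \bm E_i \bm v_i \| \, \| \widetilde{\bm r}_{i-1} \|.
\]

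The key algebraic observation — and the reason the hypothesis on $\|\bm E_i \bm v_i\|$ in~\eqref{eq:inexact_gmres_bound} is stated with the factor $\sigma_m(\bm H_m)/\|\widetilde{\bm r}_{i-1}\|$ — is that the term $\|\bm E_i \bm v_i\| \, \|\widetilde{\bm r}_{i-1}\|/\sigma_m(\bm H_m)$ is bounded by $c_i \tau$: the $\sigma_m(\bm H_m)$ and $\|\widetilde{\bm r}_{i-1}\|$ factors cancel precisely. Substituting this together with $\|\bm E_0\bm x_0\|\leq c_0\tau$ and $\|\widetilde{\bm r}_m\|\leq c_{m+1}\tau$ gives
\[
    \| \bm r_m \| \leq c_0\tau + c_{m+1}\tau + \sum_{i=1}^{m} c_i\tau = \tau \sum_{i=0}^{m+1} c_i \leq \tau,
\]
where the final inequality uses the assumption $\sum_{i=0}^{m+1} c_i \leq 1$. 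This completes the argument.

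There is no real obstacle in the proof itself; it is a one-page deduction once both lemmas are in hand. The only point worth flagging is a subtle well-posedness issue that will matter when the theorem is used algorithmically: the required bound on $\|\bm E_i \bm v_i\|$ involves $\sigma_m(\bm H_m)$ and $\|\widetilde{\bm r}_{i-1}\|$, both of which are not available at the moment iteration $i$ is carried out (the singular value in particular depends on future iterations through the whole Hessenberg $\bm H_m$). Hence in the proof of the theorem it is legitimate to treat these as fixed quantities of the completed run, but the practical significance of the result will rely on being able to estimate $\sigma_m(\bm H_m)$ on the fly, which the paper addresses separately.
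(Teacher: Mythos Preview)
Your proof is correct and matches the paper's approach exactly: the paper simply states that ``Combining Lemmas~\ref{lem:inexact_residual_norm} and~\ref{lem:gmres_y_k_bound} immediately yields the following theorem'' without spelling out the details, and your write-up is precisely the expected expansion of that remark. Your closing observation about the unavailability of $\sigma_m(\bm H_m)$ at iteration $i$ is also on point and anticipates the adaptive singular-value estimation in Algorithm~\ref{alg:inexact_gmres}.
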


The estimated residual norm $\| \widetilde{\bm r}_i \|$ is monotonically decreasing.
Therefore, Theorem~\ref{thm:inexact_gmres} suggests that we can be
\emph{increasingly crude} when computing the approximate matrix-vector products
$\widetilde{\bm{\mathcal E}}^{(i)} \bm v_i$ as the iteration proceeds.
As a result larger CG tolerances $\taucg_{i,n}$ can be employed,
thereby saving computational cost.
Based on Theorem~\ref{thm:inexact_gmres}
we propose the following inexact GMRES algorithm.

\begin{algorithm}
  \caption{Inexact GMRES($m$)}\label{alg:inexact_gmres}
  \begin{algorithmic}[1]
  \Require $\bm{\mathcal E} \in \C^{\Ng\times \Ng}$, $\bm b \in \C^{\Ng}$, $\bm x_0 \in \C^{\Ng}$, restart size $m \in \N$, tolerance $\tau > 0$; initial guess $s$ of $\sigma_m(\bm H_m)$, a procedure that given any error $\tau' > 0$ and a vector $\bm v \in \C^{\Ng}$ returns an inexact matrix-vector product $\widetilde{\bm{\mathcal E}} \bm v$ such that $\| ( \bm{\mathcal E} - \widetilde{\bm{\mathcal E}} ) \bm v \| \leq \tau'$.
  \Ensure An approximate solution $\bm x$ with $\| \bm b - \bm{\mathcal E} \bm x \| \leq \tau$
  \State Compute $\widetilde{\bm r}_0 = \bm b - \widetilde{\bm{\mathcal E}}^{(0)} \bm x_0$ with $\| (\bm{\mathcal E} - \widetilde{\bm{\mathcal E}}^{(0)}) \bm x_0 \| \leq \tau/3$ \label{line:inexact_GMRES_1}
  \State $\beta = \| \widetilde{\bm r}_0 \|$, $\bm v_1 = \widetilde{\bm r}_0 / \beta$, $\bm V_1 = \bm v_1$, $\bm H_0 = [\ ]$ %
  \For{$i = 1,2,\ldots, m$}
  \State Compute $\bm w = \widetilde{\bm{\mathcal E}}^{(i)} \bm v_i$ with $\| (\bm{\mathcal E} - \widetilde{\bm{\mathcal E}}^{(i)}) \bm v_i \| \leq \dfrac{s}{3m} \dfrac{1}{\|\widetilde{\bm r}_{i-1}\|}\tau$ \label{line:tau}
  \State Build $\bm V_{i+1}$, $\bm H_i$ by Arnoldi process, compute the estimated residual $\| \widetilde{\bm r}_i \|$
  \If{$\| \widetilde{\bm r}_i \| \leq \tau / 3$} \label{line:inexact_GMRES_termination}
  \State Compute $\sigma_i(\bm H_i)$; Compute $\bm y_i$, $\bm x = \bm x_0 + \bm V_i \bm y_i$
  \IIf{$s \leq \sigma_i(\bm H_i)$} \Return{$\bm x$}\EndIIf
  \IIf{$s > \sigma_i(\bm H_i)$} Restart: $s = \sigma_i(\bm H_i)$, $\bm x_0 = \bm x$, go to Line~\ref{line:inexact_GMRES_1} \label{line:inexact_GMRES_s_update} \EndIIf
  \EndIf
  \EndFor
  \If{$\| \widetilde{\bm r}_m \| > \tau / 3$}
  \State Compute $\sigma_m(\bm H_m)$; Compute $\bm y_m$, $\bm x = \bm x_0 + \bm V_m \bm y_m$
  \State Restart: $s = \sigma_m(\bm H_m)$, $\bm x_0 = \bm x$, go to Line~\ref{line:inexact_GMRES_1} \label{line:inexact_GMRES_restart}
  \EndIf
  \end{algorithmic}
\end{algorithm}
Compared to the original work~\cite{simoncini2003theory}
our version of inexact GMRES employs an adaptive strategy
to estimate the $m$-th singular value $\sigma_m(\bm H_m)$ of the Hessenberg matrix.
Note that $\sigma_m(\bm H_m)$ in bound \eqref{eq:inexact_gmres_bound}
is only available \emph{after} the $m$-th iteration
and thus cannot be computed in practice.
Here, we propose to introduce a variable $s$,
which %
gets updated along with the GMRES iterations,
such that at convergence
$s$ is ensured to be a lower bound to $\sigma_m(\bm H_m)$.
Concretely we change $s$ under two circumstances.
First, during a normal restart of the GMRES after $m$ iterations.
Then we set $s$ to the smallest singular value so far encountered,
$s = \sigma_m(\bm H_m)$ (line~\ref{line:inexact_GMRES_restart}).
Second, if convergence of GMRES is flagged by the estimated residual norm
(line~\ref{line:inexact_GMRES_termination}),
but $s$ happens to overestimate $\sigma_i(\bm H_i)$
--- implying that the assumptions of Theorem~\ref{thm:inexact_gmres} are violated.
In such a case we perform an additional restart, updating $s$ again
to the smallest singular value of the Hessenberg matrix,
see line \ref{line:inexact_GMRES_s_update}.
After such a restart the GMRES
convergence criterion in line~\ref{line:inexact_GMRES_termination}
is quickly re-achieved (usually just one or two GMRES iterations).
In practice we find further the decrease of $\sigma_i(\bm H_i)$
w.r.t.~$i$ to be stable across restarts.
Since the restart has updated $s$ we therefore usually have $s \leq \sigma_i(\bm H_i)$
when convergence is re-achieved and  the algorithm terminates.
While such additional restarts are rare,
they guarantee that \eqref{eq:inexact_gmres_bound}
and thus that $\| \bm r_m \| < \tau$ are satisfied
--- such that we obtain the prescribed accuracy
despite employing inexact matrix-vector products.

This strategy for handling the unknown singular values
$\sigma_m(\bm H_m)$ in the context of inexact GMRES appears to be novel,
despite being a natural refinement of well-established ideas in inexact GMRES.

\begin{remark}\label{rem:other_distributions}
  For simplicity, in Algorithm~\ref{alg:inexact_gmres} we chose an equal 
  distribution of error between the three 
  terms of \eqref{eq:inexact_residual_norm}, i.e., 
   $c_0 = c_{m+1} = 1/3$ and 
   $c_1 = \cdots = c_{m} = 1/(3m)$ in Theorem~\ref{thm:inexact_gmres}.
\end{remark}

The missing ingredient to apply this inexact GMRES procedure
to the Dyson equation is a recipe for choosing the CG tolerances $\taucg_{i,n}$
in a way that the error
$\| (\bm{\mathcal E} - \widetilde{\bm{\mathcal E}}^{(i)}) \bm v_i \|$
stays below the bound in line~\ref{line:tau} of Algorithm~\ref{alg:inexact_gmres}.
First note that the inexact matrix-vector product $\widetilde{\bm{\mathcal E}}^{(i)} \bm v_i$
is computed by
\begin{equation}\label{eq:inexact_op_discretised}
  \begin{aligned}
    &\bm{\mathcal E} \bm v_i \approx \bm{\widetilde{\mathcal E}}^{(i)} \bm v_i \\
     = & \bm v_i - \|\bm K \bm v_i\| \sum_{n=1}^{\Nocc} 2 f_n \Real \Big((\bm F^{-1} \bm{\phi}_n)^* \odot ( \bm F^{-1} \bm{\delta \phi^P}_n + \bm F^{-1} \widetilde{\bm{\delta \phi^Q}_n}^{(i)} ) \Big) + \delta f_n \left| \bm F^{-1} \bm{\phi}_n \right|^2,
  \end{aligned}
\end{equation}
that is \eqref{eq:op_discretised}, where instead of the exact SE solutions
$\bm{\delta \phi^Q}_n$ one employs the approximate orbital responses
$\widetilde{\bm{\delta \phi^Q}_n}^{(i)}$.
Introducing the shorthands
\begin{equation}\label{eq:discretised_sternheimer}
  \underbrace{\bm Q \left( \HrhoB - \epsilon_n \bm I_{\Nb} \right) \bm Q}_{=: \bm A_n} \bm{\delta \phi^Q}_n = \underbrace{- \bm Q \bm F\left( (\bm K \bm v_i) \odot \left( \bm F^{-1} \bm{\phi}_n \right) \right)/\|\bm K \bm v_i\|}_{=: \bm b_n^{(i)}}
\end{equation}
for \eqref{eq:dicretized_sternheimer_equation}
the relationship between the approximate orbital responses
and the CG tolerances $\taucg_{i,n}$ is thus
$\|\bm b_n^{(i)} - \bm A_n \widetilde{\bm{\delta \phi^Q}_n}^{(i)}\| \leq \taucg_{i,n}$.
With this notation the following lemma provides the desired bound.
\begin{lemma}\label{lem:cgtol_E}
  The error of the application of the inexact dielectric adjoint $\widetilde{\bm{\mathcal E}}^{(i)}$ on a vector $\bm v_i \in \R^{\Ng}$ is bounded by
  \begin{equation}
      \| (\bm{\mathcal E} - \widetilde{\bm{\mathcal E}}^{(i)}) \bm v_i \| 
      \leq 2 \left\| \bm K \bm v_i\right\| \|\bm{F}^{-1}\bm{\Phi}\|_{2,\infty}  \dfrac{\sqrt{\Ng\Nocc}}{\sqrt{|\Omega|}} \max_{n=1,\cdots,\Nocc} \dfrac{f_n}{\epsilon_{\Nocc + 1} - \epsilon_n} \taucg_{i,n},
  \end{equation}
  where $\|\bm M\|_{2,\infty} := \max\limits_{i = 1, \cdots, n} \| \bm M_{i,:} \|_2$ is the maximum row norm\footnote{See e.g.~\cite{lewis2023listpopularinducedmatrix} for the justification of this notation.} for $\bm M \in \C^{n\times m}$.
\end{lemma}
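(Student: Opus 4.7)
The plan is to trace how a solver residual of size $\taucg_{i,n}$ in each inner Sternheimer problem propagates into the final error $\|(\bm{\mathcal E} - \widetilde{\bm{\mathcal E}}^{(i)})\bm v_i\|$, by successively (i)~isolating the orbital-level error, (ii)~bounding it by spectral information of $\bm A_n$, and (iii)~collecting these per-orbital errors via a Cauchy--Schwarz argument, keeping careful track of the FFT normalisation $w$.

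First, I would subtract \eqref{eq:op_discretised} from \eqref{eq:inexact_op_discretised}. Since only the unoccupied orbital variations differ, all other terms (the identity, the $P$-contribution, and the $\delta f_n$ piece) cancel exactly, leaving
\begin{equation*}
    (\bm{\mathcal E} - \widetilde{\bm{\mathcal E}}^{(i)})\bm v_i
    = -\,\|\bm K \bm v_i\|\sum_{n=1}^{\Nocc} 2 f_n\,\Real\!\Big((\bm F^{-1}\bm\phi_n)^{*} \odot \bm F^{-1}\bm e_n\Big),
    \qquad \bm e_n := \bm{\delta\phi^Q}_n - \widetilde{\bm{\delta\phi^Q}_n}^{(i)}.
\end{equation*}
Next, since both exact and inexact Sternheimer iterates lie in $\range(\bm Q)$, the error $\bm e_n$ satisfies $\bm A_n \bm e_n = \bm r_n^{\mathrm{CG}}$ with $\|\bm r_n^{\mathrm{CG}}\|\le\taucg_{i,n}$. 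The operator $\bm A_n = \bm Q(\bm H_{\bm\rho}-\epsilon_n \bm I)\bm Q$ restricted to $\range(\bm Q)$ has spectrum $\{\epsilon_m-\epsilon_n : m>\Nocc\}$, so its smallest singular value is $\epsilon_{\Nocc+1}-\epsilon_n$; this gives the key per-orbital bound
\begin{equation*}
  \|\bm e_n\|_2 \;\le\; \frac{\taucg_{i,n}}{\epsilon_{\Nocc+1}-\epsilon_n}.
\end{equation*}

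The third step is to pass into real space. By \eqref{eq:fft_ifft} and unitarity of $\bm W$ together with isometry of the zero-padding $\bm Z$, $\bm F^{-1}$ scales $\ell_2$-norms by exactly $w^{-1}=\sqrt{\Ng/|\Omega|}$, hence $\|\bm F^{-1}\bm e_n\|_2 = w^{-1}\|\bm e_n\|_2$. For the fourth step, I would bound the $j$-th component of the sum pointwise and then invoke Cauchy--Schwarz in $n$: writing $[\bm M]_{j,n} := [\bm F^{-1}\bm\phi_n]_j$ and $[\bm g_n]_j := [\bm F^{-1}\bm e_n]_j$,
\begin{equation*}
  \Big|\big[\textstyle\sum_n 2 f_n\,\Real((\bm F^{-1}\bm\phi_n)^*\odot \bm F^{-1}\bm e_n)\big]_j\Big|
  \;\le\; 2\sqrt{\textstyle\sum_n |[\bm M]_{j,n}|^2}\,\sqrt{\textstyle\sum_n f_n^{2}\,|[\bm g_n]_j|^2}.
\end{equation*}
Taking the $\ell_2$ norm in $j$, pulling out $\max_j \sqrt{\sum_n|[\bm M]_{j,n}|^2}$ as the row-norm $\|\bm F^{-1}\bm\Phi\|_{2,\infty}$, and completing the sum in $j$ of the remaining factor yields $\sum_n f_n^{2}\|\bm F^{-1}\bm e_n\|_2^{2}\le w^{-2}\Nocc\,\max_n (f_n\taucg_{i,n}/(\epsilon_{\Nocc+1}-\epsilon_n))^2$. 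Multiplying by the prefactor $\|\bm K\bm v_i\|$ and substituting $w^{-1}=\sqrt{\Ng}/\sqrt{|\Omega|}$ reproduces the claimed bound.

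The step I expect to require the most care is the final combination: the bound as stated uses $\|\Real(\bm F^{-1}\bm\Phi)\|_{2,\infty}$ rather than the modulus-based row norm that emerges naturally from $|\Real(\bar a b)|\le |a||b|$. I would therefore need to justify this either by a tighter pointwise identity $\Real(\bar a b)=\Real(a)\Real(b)+\Imaginary(a)\Imaginary(b)$ grouped appropriately (using that in the relevant gauge the orbital real-space values are real, or more generally that the Sternheimer error $\bm e_n$ inherits the same complex structure as $\bm\phi_n$), or by viewing $\Real(\bm F^{-1}\bm\Phi)$ as encoding both real and imaginary parts as a single real array of double width. Getting this algebraic step clean, together with bookkeeping of the normalisation constant $w$, is where the routine-looking calculation actually needs attention; the rest is a direct chain of triangle inequalities, Cauchy--Schwarz, and the spectral bound on $\bm A_n$.
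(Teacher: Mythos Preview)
Your proposal is correct and follows essentially the same route as the paper: isolate the error to the $Q$-contribution, bound each $\|\bm e_n\|$ via the smallest nonzero singular value $\epsilon_{\Nocc+1}-\epsilon_n$ of $\bm A_n$, and then collect the per-orbital contributions with a $\sqrt{\Nocc}$ loss. The only cosmetic difference is that where you apply Cauchy--Schwarz pointwise in $j$ and then in $n$, the paper packages the same computation as a Hadamard-product identity
\[
\sum_{n} f_n\,\Real\!\big((\bm F^{-1}\bm\phi_n)^*\odot(\bm F^{-1}\bm z_n^{(i)})\big)
= \big(\Real(\bm F^{-1}\bm\Phi)\odot\Real(\bm F^{-1}\bm Y)\big)\,\bm 1_{\Nocc},
\quad \bm Y=[f_1\bm z_1^{(i)},\dots,f_{\Nocc}\bm z_{\Nocc}^{(i)}],
\]
and invokes the norm bound $\|(\bm A\odot\bm B)\bm 1\|\le \|\bm A\|_{2,\infty}\|\bm B\|_{1,2}\sqrt{\Nocc}$ (Horn--Johnson, \emph{Topics in Matrix Analysis}, Thm.~5.5.3), followed by $\|\Real(\bm F^{-1}\bm Y)\|_{1,2}\le\|\bm F^{-1}\|\,\|\bm Y\|_{1,2}=w^{-1}\max_n f_n\|\bm z_n^{(i)}\|$. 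Your direct Cauchy--Schwarz and the Hadamard-norm inequality are the same argument in different clothing and produce the same constants.

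Your hesitation about the appearance of $\|\Real(\bm F^{-1}\bm\Phi)\|_{2,\infty}$ rather than $\|\bm F^{-1}\bm\Phi\|_{2,\infty}$ is well placed. The paper obtains the $\Real$ by asserting the displayed identity above as an \emph{equality}; taken literally with $\Real=\operatorname{Re}$ elementwise this drops the $\Imaginary\cdot\Imaginary$ cross term from $\Real(\bar ab)=\Real(a)\Real(b)+\Imaginary(a)\Imaginary(b)$, so it holds only under an additional convention (e.g.\ reading $\Real(\bm M)$ as the real embedding $[\operatorname{Re}\bm M\;\operatorname{Im}\bm M]$, in which case $\|\Real(\bm M)\|_{2,\infty}=\|\bm M\|_{2,\infty}$ and your bound and the paper's coincide) or under a gauge in which the real-space orbitals are real. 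Your modulus-based bound is the version that goes through without any such assumption; note also that the paper itself reverts to $\|\bm F^{-1}\bm\Phi\|_{2,\infty}$ in Proposition~\ref{prop:FinvPhi_bound}, so the distinction is immaterial for the downstream use.
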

\begin{proof}[Proof of Lemma~\ref{lem:cgtol_E}]
    Recall that $\bm \Phi \in \C^{\Nb \times \Nocc}$ contains the eigenvectors of $\HrhoB$ with eigenvalues $\epsilon_1, \cdots, \epsilon_{\Nocc}$. Let $\bm \Phi_{\perp} \in \C^{\Nb \times (\Nb - \Nocc)}$ be the orthonormal matrix whose columns are the eigenvectors of $\HrhoB$ with eigenvalues $\epsilon_{\Nocc + 1}, \cdots, \epsilon_{\Nb}$.
    Then we have $\bm Q = \bm \Phi_{\perp} \bm \Phi_{\perp}^\mathsf{H}$. Let $\bm D_n = \diag(\epsilon_{\Nocc + 1} - \epsilon_n, \cdots, \epsilon_{\Nb} - \epsilon_n) \in \R^{(\Nb - \Nocc) \times (\Nb - \Nocc)}$ be a diagonal matrix. Then direct manipulation yields $\bm A_n = \bm \Phi_{\perp} \bm D_n \bm \Phi_{\perp}^\mathsf{H}$, and thus $\bm A_n^{\dagger} = \bm \Phi_{\perp} \bm D_n^{-1} \bm \Phi_{\perp}^\mathsf{H}$, where $\bm A_n^{\dagger}$ denotes the pseudo-inverse of $\bm A_n$ in \eqref{eq:discretised_sternheimer}. 
    Note, that this requires a gap, i.e.~$\epsilon_{\Nocc + 1} - \epsilon_{\Nocc} > 0$,
    which is always assured: the definition of $\Nocc$ implies
    $f(\epsilon_{\Nocc + 1}) < f(\epsilon_{\Nocc})$,
    which in turn by the strict monotonicity of $f$
    implies that $\epsilon_{\Nocc + 1}> \epsilon_{\Nocc}$.
    Hence we have $\bm A_n^{\dagger} \bm A_n = \bm A_n \bm A_n^{\dagger} = \bm Q$. An exact solution of \eqref{eq:discretised_sternheimer} is given by ${\bm{\delta \phi^Q}_n}^{(i)} = \bm A_n^{\dagger} \bm b_n^{(i)} \in \range(\bm Q)$. 

    Since $\bm b_n^{(i)} \in \range(\bm Q)$, therefore the Krylov subspace $\mathcal{K}_k(\bm A_n, \bm b_n^{(i)})$ is contained in $\range(\bm Q)$, and therefore the approximate solution $\widetilde{\bm{\delta \phi^Q}_n}^{(i)}$ produced by the CG algorithm
applied to \eqref{eq:discretised_sternheimer} is also in $\range(\bm Q)$. We have
    \begin{equation}
          \bm{z}_{n}^{(i)} = {\bm{\delta \phi^Q}_n}^{(i)} - \widetilde{\bm{\delta \phi^Q}_n}^{(i)}  = \bm A_n^{\dagger} \Big(\bm b_n^{(i)} - \bm A_n \widetilde{\bm{\delta \phi^Q}_n}^{(i)}\Big).
    \end{equation}
    Furthermore,
    \begin{equation}\label{eq:z_bound}
        \| \bm{z}_{n}^{(i)} \| \leq \| \bm A_n^{\dagger} \| \| \bm b_n^{(i)} - \bm A_n \widetilde{\bm{\delta \phi^Q}_n}^{(i)} \| \leq \dfrac{1}{\epsilon_{\Nocc+1} - \epsilon_n} \taucg_{i,n}.
    \end{equation}
    Combine \eqref{eq:op_discretised}, \eqref{eq:inexact_op_discretised} and
    defining $\bm Y := [ f_1 \bm z_1^{(i)}, \cdots, f_{\Nocc} \bm z_{\Nocc}^{(i)} ] \in
    \C^{\Nb \times \Nocc}$, we have
    \begin{equation}\label{eq:dielectricdiff}
      \begin{aligned}
          \| (\bm{\mathcal E} - \widetilde{\bm{\mathcal E}}^{(i)}) \bm{v}_i\| &= 2 \left\| \bm K \bm{v}_i\right\| \left\| \sum_{n=1}^{\Nocc} f_n \Real \left( \left(\bm F^{-1} \bm{\phi}_n\right)^* \odot \left( \bm F^{-1} \bm{z}_n^{(i)} \right) \right) \right\|                                                             \\
          & = 2 \left\| \bm K \bm{v}_i\right\| \left\| \Real\left((\bm{F}^{-1}\bm{\Phi})^* \odot (\bm{F}^{-1}\bm{Y})\right) \cdot \bm{1}_{\Nocc} \right\| \\
          & \leq 2 \left\| \bm K \bm{v}_i\right\| \left\| (\bm{F}^{-1}\bm{\Phi})^* \odot (\bm{F}^{-1}\bm{Y}) \right\|_{2} \, \|\bm{1}_{\Nocc}\| \\
          & \leq 2 \left\| \, \bm K \bm{v}_i\right\| \, \|\bm{F}^{-1}\bm{\Phi}\|_{2,\infty} \, \|\bm{F}^{-1}\bm{Y}\|_{1,2} \sqrt{\Nocc},
      \end{aligned}
\end{equation}
where $\bm{1}_{\Nocc} = [1, \cdots, 1]^\mathsf{T} \in \R^{\Nocc}$
and we used \cite[Theorem 5.5.3]{roger1991topics}
to bound the norm of the Hadamard product by the product of these two norms.
Furthermore $\|\cdot\|_{1,2}$ is the maximum column norm for a matrix
(in analogy to $\|\cdot\|_{2,\infty}$).
The result then follows from
\begin{equation}
    \left\|\bm{F}^{-1}\bm{Y}\right\|_{1,2}
    \leq \|\bm{F}^{-1}\| \, \|\bm Y\|_{1,2}
    = w^{-1} \max_{n=1,\cdots,\Nocc} f_n \| \bm{z}_n^{(i)} \|.
\end{equation}
\end{proof}
\begin{remark}\label{rmk:sternheimer_solution}
  We remark here that ${\bm{\delta \phi^Q}_n}^{(i)} = \bm A_n^{\dagger} \bm b_n^{(i)} \in \range(\bm Q)$ is the unique solution to the SE \eqref{eq:discretised_sternheimer} in $\range(\bm Q)$. Since all iterates (as well as the conjugate gradients, residuals) of CG algorithm all lie in $\range(\bm Q)$, ${\bm{\delta \phi^Q}_n}^{(i)}$ is thus also the solution that the CG algorithm converges to in exact arithmetic. In practical implementations, each CG iterate (also the conjugate gradient, residual) is projected onto $\range(\bm Q)$ to prevent the loss of orthogonality due to numerical errors. It is also worth noting that the solution to the SE is not unique in the full space, e.g., ${\bm{\delta \phi^Q}_n}^{(i)} + \bm \Phi \bm \alpha$ for any $\bm \alpha \in \C^{\Nocc}$ is also a solution.
\end{remark}

Combining Lemma~\ref{lem:cgtol_E} with Theorem~\ref{thm:inexact_gmres} we obtain immediately
our main theoretical result.
\begin{theorem}\label{thm:inexact_gmres_dyson}
  Consider the inexact GMRES method applied to the discretised Dyson equation \eqref{eq:dyson_discretised} $\bm{\mathcal E} \bm{\delta \rho} = \bm{\delta \rho}_0$ and define the prefactor
  \begin{equation}
      C_{i,n} = \dfrac{\sqrt{|\Omega|}(\epsilon_{\Nocc + 1} - \epsilon_n)}{2 f_n \left\| \bm K \bm v_i \right\| \left\| \bm{F}^{-1}\bm{\Phi} \right\|_{2,\infty} \sqrt{\Ng\Nocc} }
      \label{eqn:bound_prefactor}
  \end{equation}
  where $\bm v_0 := \bm x_0$ (initial guess).
  For any prescribed accuracy $\tau>0$,
  if the tolerance $\taucg_{i,n}$ of the Sternheimer equation CG solver satisfies
  \begin{equation}
      \taucg_{0,n} \leq C_{0,n} \, \dfrac{\tau}{3}
      \quad\text{and}\quad
      \taucg_{i,n} \leq C_{i,n} \, \dfrac{\sigma_m(\bm H_m)}{3m} \dfrac{\tau}{\|\widetilde{\bm r}_{i-1}\|}
  \end{equation}
  for $i=1,\cdots, m$, $n=1,\cdots, \Nocc$
  and the estimated residual of the GMRES method satisfies
  $\|\widetilde{\bm r}_m\| \leq \tau/3$, then the method terminates at
  the $m$-th iteration with accuracy $\tau$, i.e., true residual satisfies $\|\bm r_m\| \leq
  \tau$.
\end{theorem}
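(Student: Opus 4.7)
The plan is to obtain Theorem~\ref{thm:inexact_gmres_dyson} as a direct corollary of Theorem~\ref{thm:inexact_gmres} by plugging in the error bound of Lemma~\ref{lem:cgtol_E}. Following Remark~\ref{rem:other_distributions}, I would fix the error-distribution weights as $c_0 = c_{m+1} = 1/3$ and $c_1 = \cdots = c_m = 1/(3m)$, which sum to exactly one as required by Theorem~\ref{thm:inexact_gmres}. The hypothesis $\|\widetilde{\bm r}_m\| \leq \tau/3$ already matches the condition associated with $c_{m+1}$, so nothing remains to be done for that term.

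The main translation step concerns the per-iteration conditions on $\|\bm E_i \bm v_i\|$. For each $i \geq 1$, Theorem~\ref{thm:inexact_gmres} demands
\[
\| \bm E_i \bm v_i \| \leq \frac{1}{3m} \frac{\sigma_m(\bm H_m)}{\|\widetilde{\bm r}_{i-1}\|}\tau,
\]
and I would use Lemma~\ref{lem:cgtol_E} to upper-bound the left-hand side in terms of the Sternheimer tolerances $\taucg_{i,n}$, then solve for the $\taucg_{i,n}$ needed to make that bound fall below the prescribed threshold. The prefactor $C_{i,n}$ in \eqref{eqn:bound_prefactor} is precisely what appears once one rearranges the Lemma~\ref{lem:cgtol_E} estimate into the form $\taucg_{i,n} \leq (\text{threshold})$. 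The initial step $i=0$ is handled by the same rearrangement with $\bm v_0 := \bm x_0$ and target $c_0 \tau = \tau/3$.

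The only delicate point I anticipate is the $\max_{n=1,\ldots,\Nocc}$ that appears in Lemma~\ref{lem:cgtol_E}: the lemma controls the error through a single maximum over $n$, whereas the theorem prescribes a separate tolerance per orbital. I would resolve this by noting that $C_{i,n}$ has been defined so that the product $f_n \, C_{i,n}/(\epsilon_{\Nocc+1}-\epsilon_n)$ is \emph{independent of $n$}: the $f_n$ and gap factors in $C_{i,n}$ cancel exactly against those appearing in the Lemma~\ref{lem:cgtol_E} prefactor. Consequently, imposing $\taucg_{i,n} \leq C_{i,n} \cdot X$ simultaneously for all $n$ yields the same uniform bound $X$ on the maximum. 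Taking $X = \tau/3$ for $i=0$ and $X = \sigma_m(\bm H_m)\tau/(3m\|\widetilde{\bm r}_{i-1}\|)$ for $i \geq 1$ reproduces exactly the tolerance conditions stated in the theorem, and Theorem~\ref{thm:inexact_gmres} then delivers $\|\bm r_m\| \leq \tau$. The whole argument is essentially bookkeeping once this cancellation is observed, so I do not expect any deeper obstacle than verifying the algebra of the prefactor.
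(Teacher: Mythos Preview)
Your proposal is correct and matches the paper's approach exactly: the paper states that the theorem follows ``immediately'' by combining Lemma~\ref{lem:cgtol_E} with Theorem~\ref{thm:inexact_gmres}, and your plan carries out precisely that combination with the weights of Remark~\ref{rem:other_distributions}. Your observation that $f_n/(\epsilon_{\Nocc+1}-\epsilon_n)\cdot C_{i,n}$ is independent of $n$, so that the $\max_n$ in Lemma~\ref{lem:cgtol_E} collapses cleanly, is the key algebraic check and is handled correctly.
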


\begin{remark}[Superlinear convergence]\label{rmk:superlinear_convergence}
  We remark here that with the adaptive CG tolerances chosen
  according to Theorem~\ref{thm:inexact_gmres_dyson} we can expect
  the inexact GMRES method applied to the Dyson equation
  to converge \emph{superlinearly} with respect to the total number of CG iterations.
  This is demonstrated qualitatively by the following argument.
  Suppose GMRES converges linearly with respect to the number of 
  GMRES iterations $\|\bm r_i\| \leq K^i \|\bm r_0\|$ for $0<K<1$, 
  then $\taucg_{i,n} \sim 1/\|\widetilde{\bm r}_{i-1}\| \sim 1/K^{i-1}$.
  Since CG itself converges at least linearly with respect to its own iteration count,
  the number of CG iterations required at outer step~$i$ decreases approximately linearly as GMRES progresses.
  Expressed in terms of the total number of CG iterations, the overall convergence
  is therefore faster than the linear rate of GMRES.
\end{remark}

\subsection{Preconditioning the Dyson equation} \label{sec:kerker}

From the arguments in Section~\ref{sec:dfpt}
one can view the Dyson equation as a linearisation of the SCF problem~\eqref{eqn:fixedpoint}
around the fixed-point (ground state) density.
As a result we would expect any technique for improving SCF convergence
to be transferable to accelerating convergence when solving the Dyson equation
--- namely by a mere linearisation of the entire SCF algorithm itself.
For example the GMRES algorithm can be viewed as the linearisation
of Anderson-accelerated fixed-point iterations,
which is indeed one standard technique to solve SCF problems
(also sometimes called Pulay mixing or the DIIS method).
For further details on the relationship
between Anderson acceleration and GMRES
see the discussion in~\cite{Fang2009,Walker2011}.
By a similar line of arguments all preconditioning schemes
which have been used to accelerate the SCF should also be used
within the GMRES when solving the Dyson equation~\cite{Gerhorst2024}
for the same material.
Indeed, it is effectively a large spectral condition number
of the dielectric adjoint operator
$\mathcal{E} := 1 - \chi_0 \KHxc_\rho$
--- a material-dependent quantity ---
which is responsible to both slow SCF and slow Dyson equation convergence.

In the SCF context the Kerker preconditioner~\cite{kerker1981efficient} is widely employed.
It effectively cures the
divergence of the Coulomb operator $K_\rho^H$ for large wavelengths (small $\|G\|$),
which in metallic materials is not compensated
by the independent particle susceptibility operator $\chi_0$.
It thus causes the condition number of $\mathcal{E}$ to grow with system size%
~\cite{herbstBlackboxInhomogeneousPreconditioning2020}.
Based on above observations we employ the Kerker preconditioner $\bm T$
when solving the Dyson equation for large metallic systems,
i.e. we solve the preconditioned Dyson equation
\begin{equation}\label{eq:precond_dyson}
  \text{Seek } \bm{\delta \rho} \in \R^{\Ng} \text{ such that } \bm T (\bm I_{\Ng} - \bm \chi_0 \bm K) \bm{\delta \rho} = \bm T \bm{\delta \rho}_0.
\end{equation}

\paragraph{Discrete form of the Kerker preconditioner}
Denoting the discrete Fourier transform matrix by $\bm W\in\C^{\Ng\times\Ng}$
(see \eqref{eq:fft_ifft}),
the discretised Kerker preconditioner
is $\widetilde{\bm T} = \bm W^{-1} \bm D \bm W$,
where $\bm D \in \C^{\Ng \times \Ng}$, its representation in Fourier space,
is the diagonal matrix with entries
\[
\frac{\|\bm G\|^2}{\|\bm G\|^2 + \alpha^2}, \qquad \bm G\in\G^{\Box},
\]
and $\alpha>0$ is an appropriately chosen parameter.
In practice, to ensure that applying the Kerker preconditioner
conserves the total charge one employs the modification
\begin{equation}\label{eq:kerker}
    \bm T = \Big(\bm 1_{\Ng} - \frac{1}{\Ng} \bm 1_{\Ng} \bm 1_{\Ng}^{\top}\Big)
    \bm W^{-1} \bm D \bm W + \frac{1}{\Ng} \bm 1_{\Ng} \bm 1_{\Ng}^{\top},
\end{equation}
where $\bm 1_{\Ng} \in \C^{\Ng}$ is the vector of all ones.
This yields the following lemma.
\begin{lemma}\label{lem:kerker_properties}
  The Kerker preconditioner $\bm T$ is a Hermitian positive definite matrix
  with the maximum eigenvalue $\lambda_{\max} = 1$ and $\| \bm T \| = 1$.
\end{lemma}

\begin{proof}
From the definition of the discrete Fourier transform matrix we have,
\[
1_{\Ng}^\top \bm W^{-1} = \sqrt{\Ng}\, \bm e_1^\top,\qquad
\bm e_1^\top \bm W = \frac{1}{\sqrt{\Ng}}\, \bm 1_{\Ng}^\top,\qquad
\bm W^{-1} \bm e_1 = \frac{1}{\sqrt{\Ng}}\, \bm 1_{\Ng}.
\]
Hence
\begin{equation}\label{eq:kerker_preconditioner_1}
    \bm 1_{\Ng}\bm 1_{\Ng}^\top \bm W^{-1} \bm D \bm W
    = D_{11}\, \bm 1_{\Ng}\bm 1_{\Ng}^\top = \Ng \bm W^{-1} \bm e_1 \bm e_1^{\top} \bm W
\end{equation}
where $D_{11}$ is the first diagonal entry of $\bm D$.
Inserting~\eqref{eq:kerker_preconditioner_1} into~\eqref{eq:kerker} gives
\[
    \bm T = 
    \bm W^{-1}(\bm D + (1 - D_{11})\, \bm e_1 \bm e_1^{\top}) \bm W,
\]
which shows that~\eqref{eq:kerker} is equivalent to setting
the first diagonal element of $\bm D$ to unity.
Since $\bm W$ is unitary and $\bm D$ is real diagonal with
entries in $(0,1]$, it follows that $\bm T$ is Hermitian positive definite
and in particular $\|\bm T\|=\lambda_{\max}=1$.
\end{proof}

\paragraph{Application to our analysis of inexact GMRES}
As a consequence of Lemma~\ref{lem:kerker_properties},
the Kerker preconditioner has unit norm and thus
$\|\bm T (\bm{\mathcal E} - \widetilde{\bm{\mathcal E}}^{(i)}) \bm v_i \|
\leq \|(\bm{\mathcal E} - \widetilde{\bm{\mathcal E}}^{(i)}) \bm v_i \|.$
Therefore, Theorem~\ref{thm:inexact_gmres_dyson}
remains valid when applied to the preconditioned Dyson equation~\eqref{eq:precond_dyson}.

\subsection{Practical adaptive strategies}
\label{sec:practical}
\begin{table}
  \newcommand{\rhsterm}{$\dfrac{\sigma_m(\bm H_m)}{3m} \dfrac{ \tau}{\|\widetilde{\bm r}_{i-1}\|}$}
  \centering
  \smaller
  \begin{tabular}{crll}
    \toprule
    \multicolumn{3}{l}{Adaptive strategies\quad$\taucg_{i,n} = $}&Approximations \\
    \midrule
    \sgrt     & $ \dfrac{1}{\left\| \bm K \bm v_i \right\| \left\| \Real(\bm{F}^{-1}\bm{\Phi}) \right\|_{2,\infty}}$
                $ \dfrac{\sqrt{|\Omega|}}{2 f_n \sqrt{\Ng \, \Nocc}}$
                & \rhsterm & \ref{sec:approx_real},  \ref{sec:approx_gap}\\[0.2em]
    \sbal     & $ \dfrac{\sqrt{|\Omega|}}{\sqrt{\Nocc}}$
    $ \dfrac{\sqrt{|\Omega|}}{2 f_n \sqrt{\Ng \, \Nocc}}$
    & \rhsterm & \ref{sec:approx_gap}, \ref{sec:approx_kernel}, \ref{sec:approx_orbital}\\[0.2em]
    \sagr     & $1$\hspace{2.5em} & \rhsterm & Set $C_{i,n}=1$ \\[0.4em]
    \bottomrule\\
    \toprule
    \multicolumn{3}{l}{Baseline strategies\quad$\taucg_{i,n} = $} \\
    \midrule
    \sdten     & $ \sfrac{1}{10} $ & $\tau$\\[0.2em]
    \sdhun    & $\sfrac{1}{100} $ & $\tau$\\[0.2em]
    \sdnorm     & $\sfrac{1}{10 \|\bm{\delta \rho}_0\|} $ & $\tau$\\
    \bottomrule
  \end{tabular}
  \caption{Summary of strategies: the CG tolerances $\taucg_{i,n}$ are set as
      the product of a prefactor (first column)
      and a common factor shared by all strategies (second column).
      For the adaptive strategies the third column lists
      the sections discussing the approximations we make
      over the prefactor
      $C_{i,n}$ of Theorem~\ref{thm:inexact_gmres_dyson}.
  }
  \label{tab:strategies}
\end{table}

The theoretical choice for the CG tolerances
suggested by Theorem~\ref{thm:inexact_gmres_dyson}
guarantees the accuracy of the returned solution,
but tends to be overly conservative.
In our numerical studies we will therefore focus on three modifications
of the prefactor $C_{i,n}$ of Theorem~\ref{thm:inexact_gmres_dyson},
see Table~\ref{tab:strategies}.
Ranging from conservative to aggressive we term the resulting strategies
\sgrt (``guaranteed''), \sbal (``balanced'') and \sagr (``aggressive'')
--- with \sbal providing the best compromise
between efficiency and accuracy in our experiments.
A discussion of the approximations underlying these strategies
is provided in the onset.
For completeness Table~\ref{tab:strategies} also lists the definition of three
baseline strategies, %
in which the CG tolerances is constant across all GMRES iterations.
These are modelled after common naive choices to choose the tolerance
of the inner Sternheimer problems and will be employed
in our numerical experiments for comparison.

\subsubsection{Dropping the imaginary part of the orbitals}\label{sec:approx_real}
For the \sgrt strategy in our numerical experiments we replaced
the orbital term $\|\bm{F}^{-1}\bm{\Phi}\|_{2,\infty}$ by its real part
$\|\Real(\bm{F}^{-1}\bm{\Phi})\|_{2,\infty}$.
In settings where an entirely real-valued representation can be chosen
(such as the setting of lattice-periodic orbitals as we introduced so far)
this modification can be made without introducing an approximation.
But even in more general settings the real and imaginary parts of $\bm{F}^{-1}\bm{\Phi}$
are usually of comparable magnitude, so such an approximation leads
to only a mild increase of the prefactor --- at most a factor of $\sqrt{2}$.
In our experiments we find this to not impact the ability of the \sgrt
strategy to reliably yield the desired accuracy in the Dyson equation solution.

\subsubsection{Dropping the eigenvalue gaps \texorpdfstring{$(\epsilon_{\Nocc + 1} - \epsilon_n)$}{epsilonDiff}}
\label{sec:approx_gap}
This term arises as a bound to the spectral norm of the pseudoinverse $\| A_n^\dagger \|$,
which is used in \eqref{eq:z_bound} to bound the $\ell_2$ error $\| \bm{z}_n^{(i)}\|$
of the Sternheimer solution.
However, in practical computations we employ
a Schur complement approach~\cite{cances2023numerical}
when solving \eqref{eq:discretised_sternheimer},
which modifies the effectively employed operator $A_n$
in \eqref{eq:discretised_sternheimer} leading to an increase
of its smallest eigenvalue, thus a reduction of the spectral norm $\| A_n^\dagger \|$.
In practice making the assumption
$\|\bm z_{n}^{(i)} \| \approx \| \bm r_n^{(i)} \|$,
i.e. setting $(\epsilon_{\Nocc + 1} - \epsilon_n) \approx 1$,
thus turns out to be a good approximation.
This is also demonstrated numerically in \supplement \ref{supp:approx_gap}
for our two example systems from Section~\ref{sec:experiments}.
Motivated by this outcome we keep the terminology ``guaranteed'' for our \sgrt
strategy despite making this approximation over Theorem~\ref{thm:inexact_gmres_dyson}.

\subsubsection{Dropping the Hartree-exchange-correlation kernel term \texorpdfstring{$\| \bm{K} \bm{v}_i \|$}{Kvi}}
\label{sec:approx_kernel}
The Hartree-exchange-correlation kernel is
composed of two terms $\KHxc = K^\text{H} + K^\text{xc}$.
In the absence of spontaneous symmetry breaking
the Hartree term usually dominates
and the XC term can be neglected (random phase approximation).
In a plane-wave basis the Hartree term is a diagonal matrix with entries
$\left(\bm K_{\mathrm F}\right)_{\vec{G},\vec{G}}
= \frac{4\pi}{\| \vec{G}\|^2}$ with the exception
$\left(\bm K_{\mathrm F}\right)_{\bm 0 \bm 0} = 0$
(due to the compensating background charge).
We observe the Coulomb divergence:
for small $\|\bm G\|$ this operator is unbounded.
Since the smallest non-zero $\|\bm G\|$ of the cubic Fourier grid $\G^{\Box}$
is $\frac{2\pi}{\| \bm a_1 \|}$
--- with $\bm a_1$ denoting the largest cell vector ---
we obtain that $\|\bm K_{\mathrm F}\| \leq \frac{\| \bm a_1 \|^2}{\pi}$.
This leads to the naive bound
\begin{equation}
    \label{eq:bound_k_naive}
    \| \bm K \bm v_i \| \simeq \| \bm K_{\mathrm H} \bm v_i \|
    \leq \| \bm K_{\mathrm H} \| \| \bm v_i \|
    = \| \bm K_{\mathrm F} \| \| \bm v_i \|
    \leq \frac{\|\bm a_1\|^2}{\pi} \| \bm v_i \|,
\end{equation}
where in the third step  we used that the real-space representation
of the Hartree kernel is $\bm K_{\mathrm H} = \bm W \bm K_{\mathrm F} \bm W^{-1}$
and that Fourier transforms are unitary.
From this development it seems there is little hope
that the term $\| \bm K \bm v_i \|$ can be dropped:
as the system size (thus $\| \bm a_1 \|$) increases
it may in principle grow quadratically.
This behaviour is indeed observe in practice
as \supplement \ref{supp:approx_kernel} demonstrates
for an aluminium supercell in the absence of preconditioning.

However, when solving the Dyson equation using Kerker-preconditioned GMRES
all Krylov vectors in the GMRES subspace as well as the iterates $\bm v_i$
benefit from the effect of preconditioning.
As the Kerker preconditioner is constructed to counteract the Coulomb
divergence the entries of $\bm v_i$ decay sufficiently
rapidly with decreasing $\|\bm G\|$,
such that $\| \bm K \bm v_i \|$ stays roughly constant as system size grows,
see \supplement \ref{supp:approx_kernel}.
When modelling insulators Kerker preconditioning is not necessary
for $\| \bm K_{\mathrm H} \bm v_i \|$ to remain constant
as the Coulomb divergence is in this case
compensated by the large-wavelength limit of
$\chi_0$~\cite{herbstBlackboxInhomogeneousPreconditioning2020,cances2010dielectric}.
Overall employing appropriate preconditioning of the Dyson equation
the term $\| \bm K \bm v_i \|$ thus stays small even for large systems
motivating us to drop this term for our \sbal and \sagr strategies.

\subsubsection{Replacing the orbital term by its lower bound}
\label{sec:approx_orbital}
For the orbital term $\left\| \bm{F}^{-1}\bm{\Phi} \right\|_{2,\infty}$ we can derive
upper and lower bounds.
\begin{proposition}\label{prop:FinvPhi_bound}
  With the notations defined above, we have
\begin{equation}
    \sqrt{\frac{\Nocc}{|\Omega|}} \leq \left\| \bm{F}^{-1}\bm{\Phi} \right\|_{2,\infty} \leq \sqrt{\frac{\Ng}{|\Omega|}}.
\end{equation}
\end{proposition}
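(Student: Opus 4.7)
The plan is to reduce the problem to analyzing a matrix with orthonormal columns by absorbing the normalization constant. Recall from \eqref{eq:fft_ifft} that $\bm{F}^{-1} = w^{-1} \bm{W}^{-1} \bm{Z}$ with $w = \sqrt{|\Omega|}/\sqrt{\Ng}$, so
\begin{equation*}
    \bm{F}^{-1}\bm{\Phi} = w^{-1}\bm{M}, \qquad \bm{M} := \bm{W}^{-1}\bm{Z}\bm{\Phi} \in \C^{\Ng \times \Nocc}.
\end{equation*}
First I would verify that $\bm{M}$ has orthonormal columns: since $\bm{Z}$ is a zero-padding matrix, $\bm{Z}^{\mathsf{H}}\bm{Z} = \bm{I}_{\Nb}$; combined with $\bm{W}^{-1}$ being unitary and $\bm{\Phi}^{\mathsf{H}}\bm{\Phi} = \bm{I}_{\Nocc}$, we get $\bm{M}^{\mathsf{H}}\bm{M} = \bm{I}_{\Nocc}$.

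For the lower bound, I would use the trace identity. Writing $\bm{m}_i^{\mathsf{H}}$ for the $i$-th row of $\bm{M}$, we have
\begin{equation*}
    \sum_{i=1}^{\Ng} \|\bm{m}_i\|_2^2 = \trace(\bm{M}\bm{M}^{\mathsf{H}}) = \trace(\bm{M}^{\mathsf{H}}\bm{M}) = \Nocc.
\end{equation*}
By the pigeonhole principle across the $\Ng$ rows, $\|\bm{M}\|_{2,\infty}^2 = \max_i \|\bm{m}_i\|_2^2 \geq \Nocc/\Ng$. Multiplying by $w^{-1} = \sqrt{\Ng/|\Omega|}$ gives $\|\bm{F}^{-1}\bm{\Phi}\|_{2,\infty} \geq \sqrt{\Nocc/|\Omega|}$.

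For the upper bound, the cleanest observation is that $\bm{P} := \bm{M}\bm{M}^{\mathsf{H}}$ is an orthogonal projector onto $\range(\bm{M})$, since $\bm{P}^2 = \bm{M}(\bm{M}^{\mathsf{H}}\bm{M})\bm{M}^{\mathsf{H}} = \bm{P}$ and $\bm{P}^{\mathsf{H}} = \bm{P}$. The diagonal entries of any orthogonal projector lie in $[0,1]$ (a standard fact following from $P_{ii} = P_{ii}^2 + \sum_{k \neq i}|P_{ik}|^2$). Since $P_{ii} = \|\bm{m}_i\|_2^2$, this gives $\|\bm{M}\|_{2,\infty} \leq 1$, and multiplying by $w^{-1}$ yields the upper bound $\sqrt{\Ng/|\Omega|}$.

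There is no real obstacle here: the argument is essentially three lines once one writes $\bm{F}^{-1}\bm{\Phi} = w^{-1}\bm{M}$. The only point requiring slight care is to remember the two conventions simultaneously present — the factor $w^{-1}$ comes from the DFTK normalization of the inverse FFT (Section~\ref{sec:plane_wave_basis}), while the orthonormality of $\bm{\Phi}$ in $\C^{\Nb}$ is what makes $\bm{M}$ have orthonormal columns in $\C^{\Ng}$ (this is the ``orthonormal in Fourier basis but only orthogonal in real space'' distinction noted after Figure~\ref{fig:discretisation}).
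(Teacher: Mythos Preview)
Your proof is correct and follows essentially the same route as the paper: both rescale $\bm{F}^{-1}\bm{\Phi}$ by $w$ to obtain a matrix with orthonormal columns, and both derive the lower bound from the trace identity (sum of squared row norms equals $\Nocc$) followed by pigeonhole. The only cosmetic difference is in the upper bound: the paper uses that each row norm is bounded by the spectral norm, $\|\bm{M}(i,:)\|_2 = \|\bm{e}_i^{\mathsf{H}}\bm{M}\|_2 \leq \|\bm{M}\|_2 = 1$, whereas you reach the same conclusion via the diagonal entries of the projector $\bm{M}\bm{M}^{\mathsf{H}}$; these are equivalent one-line facts about matrices with orthonormal columns.
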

\begin{proof}
  It suffices to prove
  $\sqrt{\frac{\Nocc}{\Ng}} \leq \left\| \bm\Phi_{\mathrm{real}} \right\| \leq 1$
  where $\bm \Phi_{\mathrm{real}} = \sqrt{\frac{|\Omega|}{\Ng}} \, \bm{F}^{-1}\bm{\Phi}$.
  By the definition of $\bm{F}^{-1}$ \eqref{eq:fft_ifft} and the fact that $\bm
  \Phi$ has orthonormal columns, $\bm \Phi_{\mathrm{real}}$ also has
  orthonormal columns. Therefore, for the upper bound, it follows from
  \linebreak
  $\|\bm{\Phi}_{\mathrm{real}}(i,:)\| \leq \|\bm{\Phi}_{\mathrm{real}}\| = 1$
  for $i=1,\cdots, \Ng$. For the lower bound, it follows from 
  \begin{equation}
    \Nocc = \sum_{j=1}^{\Nocc} \|\bm{\Phi}_{\mathrm{real}}(:,j)\|^2 = \sum_{i=1}^{\Ng} \|\bm{\Phi}_{\mathrm{real}}(i,:)\|^2 \leq \Ng \max_{i=1,\cdots, \Ng} \|\bm{\Phi}_{\mathrm{real}}(i,:)\|^2.
  \end{equation}
\end{proof}
The upper bound is not useful since it is very pessimistic,
essentially assuming localisation of the orbitals at a single grid point.
This contrasts with the typical behavior of orbitals in practical
Kohn-Sham DFT calculations, where the (pseudopotential) model is precisely
chosen to achieve a smooth real-space functional form,
resulting in fast Fourier decay.
For this reason we will replace the orbital term
by its lower bound  in our \sbal and \sagr strategies.
However, if this approximation was not satisfactory
we could keep the term $\left\| \bm{F}^{-1}\bm{\Phi} \right\|_{2,\infty}$
as its computation is a cheap one-time cost.

\begin{remark}[Scaling of the bounds with system size]\label{rmk:scaling}
While it is in general easy to construct practical heuristics
for convergence tolerances which work well in small material systems,
it is much harder to make sure these heuristics keep working
for larger, more realistic systems.
To ensure our strategies (see Table~\ref{tab:strategies})
do not deteriorate for larger systems, we will explicitly analyse
their scaling with system size.
Consider extending the simulation cell
by building a supercell made up of $\ell$ replica in one Cartesian direction.
In this process both the cell volume $|\Omega|$
and the number of electrons $\Nocc$ scale linearly with $\ell$.
The approximate scaling of $\Ng$ in equation \eqref{eq:Nb_Ng_definition}
establishes an approximate linear scaling of $\Ng$ with $\ell$ as well.
Consider the exact prefactor of Theorem~\ref{thm:inexact_gmres_dyson}
\begin{align*}
    C_{i,n} &=
    \frac{\epsilon_{\Nocc+1} - \epsilon_n}{%
        \left\| \bm K \bm v_i \right\|
        \left\| \bm{F}^{-1}\bm{\Phi} \right\|_{2,\infty}}
    \frac{\sqrt{|\Omega|}}{2 f_n \sqrt{\Ng \, \Nocc}}.
\end{align*}
Since both the upper and lower bound
of $\left\| \bm{F}^{-1}\bm{\Phi} \right\|_{2,\infty}$
obtained in Proposition~\ref{prop:FinvPhi_bound} do not scale with $\ell$
we can deduce this term to be asymptotically invariant to $\ell$ as well.
Following our arguments in Sections~\ref{sec:approx_gap}~and~\ref{sec:approx_kernel}
an appropriate preconditioning of the CG as well as Dyson equations
should keep the eigenvalue difference as well as the kernel
term invariant to $\ell$.
Overall we expect $C_{i,n} = O(1 / \sqrt{\ell})$.

By similar arguments
we can deduce this scaling
with $\ell$ to be conserved for our proposed \sgrt and \sbal strategies,
but violated for \sagr.
When solving the Dyson equation using the \sagr strategy
we would thus expect the final true GMRES residual to deviate from
the desired tolerance $\tau$ more and more as $\ell$ gets larger.
This trend is indeed observed in our numerical experiments
(see Table~\ref{tab:summary_Al_res}).
\end{remark}

\section{Numerical experiments}
\label{sec:experiments}
\subsection{From theory to computation: methodology and general setup}
\subsubsection{Modelling perfect crystals}
\label{sec:crystals}
In our numerical experiments we will target problems
from condensed matter physics to demonstrate the efficiency and accuracy
of our adaptive CG tolerance strategies.
In condensed matter simulations
materials are commonly modelled as perfect crystals,
i.e.~systems in which a finite-sized simulation cell (the unit cell $\Omega$)
is infinitely repeated periodically, forming a Bravais lattice $\La$.
To avoid dealing with an infinitely large problem,
practical simulations commonly employ the \textit{supercell} approach,
in which the unit cell $\Omega$ is only repeated finite times in each direction.
For example, a $\ell_1 \times \ell_2 \times \ell_3$ supercell is formed
by repeating $\Omega$ respectively $\ell_1$, $\ell_2$, and $\ell_3$
times in the directions of the lattice vectors,
thus leading to a system consisting of  $L = \ell_1 \ell_2 \ell_3$ unit cells.
At the boundary one employs periodic boundary conditions %
to avoid artificial surface effects.

When solving the Kohn-Sham problem on a $\ell_1 \times \ell_2 \times \ell_3$ supercell
we are thus in the setting of Section \ref{sec:mathematical} with respectively
a larger simulation cell.
This will lead to a $L$-times larger eigenvalue problem with $L$-times
more occupied orbitals to be computed
(since $\Nb$, $\Nocc \sim L |\Omega|$ as discussed in Remark~\ref{rmk:scaling}).
When treated naively this would drive up computational cost by a factor $L^3$.
However, assuming no spontaneous symmetry breaking
the KS ground state keeps the $\La$-translational invariance of the Hamiltonian
even when performing the simulation in a supercell.
As a result Bloch theory enables to relabel the supercell orbitals $\phi_n$ as
$\phi_{n\bm k} (\bm r) = \e^{\im \bm k \cdot \bm r} u_{n \bm k}(\bm r)$,
where we re-indexed $n$ as $(n, \bm k)$.
See e.g.~\cite[Section XIII.16]{reed1978methods4} for more details.
Notably  $u_{n \bm k}$ is again $\La$-periodic and the
Kohn-Sham equations \eqref{eq:KS} can be rewritten as
\mbox{$H_{\rho,\bm k} u_{n\bm k} = \epsilon_{n\bm k} u_{n\bm k}$} with
increasing eigenvalues $\epsilon_{n\bm k}$,
orthonormal eigenfunctions $u_{n\bm k}$
and the $\bm k$-point Hamiltonian 
$H_{\rho,\bm k} = \frac{1}{2}(-\im \nabla + \bm k)^2 + \Vext(\vec{r}) + \VHxc_\rho(\vec{r})$
(c.f. \eqref{eqn:Hamiltonian}).
The electronic density $\rho$ is rewritten as (c.f. \eqref{eqn:density})
\begin{equation}
    \label{eq:bzdensity}
    \rho(\bm r) = \dfrac{1}{L} \sum_{\bm k \in \mpgrid} \sum_{n=1}^{\infty} f_{n \bm k} |u_{n\bm k}(\bm r)|^2, \ f_{n\bm k} = \fFD\left(\dfrac{\epsilon_{n\bm k} - \epsilonF}{T}\right), \ \dfrac{1}{L}\sum_{\bm k \in \Omega^*}\sum_{n=1}^{\infty} f_{n\bm k} = N.
\end{equation}
Here, $\Omega^*$ is the first Brillouin zone, the unit cell of the dual lattice $\La^*$ of $\La$.
Moreover the grid $\mpgrid = \Omega^* \cap \left(
    \mathbb{Z}\frac{\mathbf{b}_1}{\ell_1} + \mathbb{Z}\frac{\mathbf{b}_1}{\ell_2} + \mathbb{Z}\frac{\mathbf{b}_1}{\ell_3}
\right)$ is known as the Monkhorst-Pack grid~\cite{monkhorst1976special} in the physics literature.
Assuming the absence of spontaneous breaking of translational symmetry in the density
these equations approach the solution of the Kohn-Sham problem for perfect crystals
in the thermodynamic limit $L \to \infty$.
In this limit the sums over $\bm k$ in \eqref{eq:bzdensity} become integrals over the Brillouin zone $\Omega^*$.
An equivalent interpretation of the supercell method is thus as a quadrature of
such integrals over $\Omega^*$ with quadrature nodes $\mpgrid$.
For more details on the supercell approach see~\cite[Section 2.8]{lin2019math_electronic},
for a proof of the thermodynamic limit in the setting of reduced Hartree-Fock theory,
see~\cite{cances2008localdefects,catto1998mathematical,gontier2016convergencerhf}
and for an analysis of common Brillouin zone quadrature schemes see~\cite{cances2020quadratureBZ}.

This supercell approach thus reduces the complexity massively
as only computations on the unit cell need to be performed.
Moreover each $\bm k$-point-specific Hamiltonian can be treated independently
leading to a natural parallelism, which can be easily exploited during the computation.

Following a similar argument one can employ Bloch theory to obtain a representation
of the application $\chi_0 \delta V$ in terms of Bloch modes.
Comparing to \eqref{eq:chi0} this amounts to adding an additional sum over $\bm k$-points, i.e.
\begin{equation}
    \label{eq:bzchi0}
  \chi_0 \delta V
  = \frac1L \sum_{\bm k \in \mpgrid} \sum_{n=1}^{\Nocck} 2 f_{n\bm k} \Real \big( \phi_{n\bm k}^* \, \delta \phi_{n\bm k} \big) + \delta f_{n\bm k} \, \left| \phi_{n \bm k} \right|^2,
\end{equation}
where we understand $\phi_{n\bm k}(\bm r) = e^{i \bm k \cdot \bm r} u_{n\bm k}(\bm r)$.
In this work we only consider lattice-periodic perturbations $\delta V$,
such that the orbital responses $\delta \phi_{n\bm k} = \delta \phi^P_{n\bm k} + \delta \phi^Q_{n\bm k}$
have identical structure to the expressions in Section \ref{sec:mathematical},
the main change being an additional index $\bm k$, namely
\begin{equation}
    \delta \phi_{n\bm k}^P = \sum_{m=1}^{\Nocck}
    \frac{f_{n\bm k}^2}{f_{n\bm k}^2 + f_{m\bm k}^2} \frac{f_{n\bm k} - f_{m\bm k}}{\epsilon_{n\bm k} - \epsilon_{m\bm k}} \braket{\phi_{m\bm k}}{\delta V \phi_{n\bm k}} \phi_{m\bm k}
\end{equation}
for $n = 1,\ldots,\Nocck$ and $\bm k \in \mpgrid$. $\phi_{n\bm k}^Q$ is again obtained by solving a Sternheimer equation
    $Q \left( H_{\rho,\bm k} - \epsilon_{n\bm k} \right) Q \delta \phi_{n\bm k}^Q = -Q \left(\delta V \phi_{n\bm k}\right)$.
Therefore, Theorem~\ref{thm:inexact_gmres_dyson} and the adaptive strategies proposed
in Section~\ref{sec:practical} remain applicable due to the similar mathematical structure.
The main change is to replace $n$ in the prefactor $C_{i,n}$ by $n\bm k$ 
and note that the number of occupied orbitals $\Nocc$ is now the sum of the number of occupied orbitals
for each $\bm k$-point, i.e. $\Nocc = \sum_{\bm k} \Nocck$.
See \supplement \ref{supp:perfect_crystals} for more details on this extension.

\subsubsection{Implementation and computational setup}
\label{sec:setup}
We implemented the inexact GMRES framework
(Algorithm \ref{alg:inexact_gmres})
within the Density-Functional ToolKit (DFTK)~\cite{herbst2021dftk}
--- an accessible framework for mathematical
research on plane-wave DFT methods written in the \texttt{Julia}
programming language~\cite{Bezanson2017}.
Unless otherwise specified
we followed previous work~\cite{cances2023numerical} on DFPT problems and employed
a uniform Monkhorst-Pack grid to discretise the Brillouin zone,
the PBE exchange-correlation functional~\cite{perdew1996gga} and
GTH pseudopotentials~\cite{goedecker1996separable,hartwigsen1998relativistic}.
Our DFPT test problems are obtained by solving
the self-consistent field problem without any perturbation,
followed by introducing a perturbing potential $\bm{\delta V}_0$
resulting from displacing the atomic positions in the simulation cell.
This perturbation provides the right-hand side $\bm{\chi}_0 \bm{\delta V}_0$
of a Dyson equation \eqref{eq:dyson_discretised},
which is solved using Algorithm \ref{alg:inexact_gmres}.
Throughout our experiments we employ a zero initial guess ($\bm x_0 = \bm 0$)
and use an initial estimate $s=1$ for $\sigma_m(\bm H_m)$.
For the inexact applications %
of the dielectric operator we solve the Sternheimer equations
using the Schur complement trick of~\cite{cances2023numerical} as well as
a CG solver with kinetic energy preconditioning
(the kinetic operator is the scaled inverse Laplacian,
which is diagonal in a Fourier representation).
The employed CG convergence tolerances $\taucg_{i,n\bm k}$
are chosen adaptively according to one of the adaptive strategies
from Table~\ref{tab:strategies}.
For each Sternheimer solve a minimum of one CG iteration is performed
even if the initial residual norm is already below this tolerance.

The full source code to reproduce the experiments,
along with more exhaustive numerical tests, is available
at \url{https://github.com/bonans/inexact_Krylov_response}.
Since version 0.7.14 the adaptive algorithm introduced in this work
is employed as the default Dyson equation solver in DFTK.

\subsubsection{Performance metrics}
In our experiments we will employ the following three metrics
to compare our various strategies:

\paragraph{Cost}
The cost for solving the Dyson equation
is dominated by the cost of solving the $\Nocc$ Sternheimer equations
for each GMRES iteration.
In turn the cost of solving a Sternheimer equation
is directly related to the number of applications of the Hamiltonian $\bm{H}_{\bm\rho}$
as its most expensive step.
We thus employ the total number of Hamiltonian applications $\Nham$
as a measure to evaluate the computational cost. %

\paragraph{Accuracy}
Some strategies for adaptively selecting
the CG tolerances violate theorem~\ref{thm:inexact_gmres_dyson},
such that accuracy of the solutions $\xend$ returned by the inexact GMRES can vary.
We compare accuracies by making reference to the \textit{true} residual
norm $\|\rend\| = \|\bm{b} - \bm{\mathcal E}\xend\|$,
which is obtained by applying the dielectric adjoint $\bm{\mathcal E}$
with the CG tolerance of all Sternheimer equations tightened to $10^{-16}$.
This same residual expression is used
for both preconditioned and non-preconditioned tests.
Similarly when discussing the residuals at the $i$-th GMRES iteration
we make the distinction between the \emph{estimated} residual $\|\widetilde{\bm{r}}_i\|$,
which is employed in the GMRES procedure itself,
and the \emph{true} residuals $\|\bm{r}_i\| = \|\bm{b} - \bm{\mathcal E}\bm{x}_i\|$,
obtained by an explicit re-computation using tight Sternheimer tolerances.

\paragraph{Efficiency}
Different strategies for selecting the CG tolerances show an intrinsic trade-off
between the computational cost and obtained accuracy.
To quantify this balance more systematically we introduce the (absolute) efficiency
\begin{equation}
\eta := -\dfrac{\log_{10} \left( \|\rend\| / \|\bm{r}_0\| \right)}{\Nham} > 0,
\end{equation}
where $\|\bm{r}_0\|$ ($\|\rend\|$) is the initial (final) true residual.
Note that $\eta$ effectively measures the average residual reduction
per Hamiltonian application and can also be interpreted as the
convergence rate in terms of the number of Hamiltonian applications.
Additionally, we use a \emph{relative efficiency} $\releta$
--- that is the ratio of $\eta$ of one strategy with respect to a reference.
Here, one desires to obtain a large value $\releta > 1$
to conclude that the evaluated strategy is \emph{more efficient} than the reference.
For example, $\releta = 2$ indicates that compared to the reference
half as many Hamiltonian applications are needed to achieve the same accuracy.

\subsection{Systematic comparison of strategies on an aluminium supercell}\label{sec:Al}
We first compare our various strategies for setting
the CG tolerances (Table~\ref{tab:strategies}),
both with and without preconditioning in the outer inexact GMRES.
As an example we take a standard problem in condensed matter physics,
namely an elongated metallic supercell.
Specifically we take an aluminium supercell with $40$ atoms
obtained by repeating a $4$-atom cubic unit cell $10$ times in the $x$ direction,
see Figure~\ref{fig:convergence_Al40_new} (A).
On top of the parameters of section \ref{sec:setup} we chose $\Ecut = 40$ Ha,
Fermi-Dirac smearing with temperature $T=10^{-3}$ Ha and a
$1\times 3\times 3$ $\bm k$-point grid. In this setup the Dyson equation has size
$\Ng = 911\,250$ and each application of the dielectric adjoint
$\bm{\mathcal E}$ requires solving $\Nocc = 569$ SEs, each of
size $\Nb \approx 54\,200$.
We remark that in standard computational setups approximations are more consistent
across $\bm k$-points if one constructs the spherical Fourier basis in \eqref{eqn:fourier_bases}
by the condition $\| \bm{G} + \bm{k} \|_2 \leq \sqrt{2 \Ecut}$
--- thus making the basis and thus $\Nb$ differ across $\bm k$ points.

\begin{table}
  \centering
  \smaller
  \begin{tabular}{c|cccc|cccccc}
    \toprule
    Strategy         & \textcolor{blue}{\sagr}    & \textcolor{blue}{\sbal}    & \sdten    & \sdhun    & \textcolor{blue}{\sPagr}    & \textcolor{blue}{\sPbal}   & \textcolor{blue}{\sPgrt}    & \sPdten    & \sPdhun   & \sPdnorm \\
    \midrule
    $\|\rend\|$     & 2e-7        & 6e-9        & 9e-6        & 7e-7        & 2e-8        & \textcolor{orange}{7e-10}  & \textcolor{orange}{2e-10}  & 1e-5        & 1e-6        & \textcolor{orange}{5e-9}   \\
    $\Nham$      & 385\,k      & 453\,k      & 499\,k      & 539\,k      & \textcolor{orange}{168\,k} & \textcolor{orange}{196\,k} & 228\,k      & \textcolor{orange}{194\,k} & 219\,k      & 275\,k      \\
    $\releta$ & 1.53                  & 1.50                  & 1.00                  & 1.04                  & \textcolor{orange}{3.87}             & \textcolor{orange}{3.71}             & \textcolor{orange}{3.31}             & 2.52                  & 2.49                  & 2.47                  \\
    \bottomrule
  \end{tabular}
  \caption{\textbf{Aluminium supercell:} Accuracy of the returned solution
      $\|\rend\|$ (i.e.~true non-preconditioned residual),
  total number of Hamiltonian applications $\Nham$ and relative
  efficiency $\releta$ (referencing \sdten) for the
  aluminium supercell with $\tau = 10^{-9}$. Adaptive strategies are
  highlighted in blue, and the top three strategies for each metric are
  highlighted in orange.}
  \label{tab:summary_Al10_20_-9}
\end{table}

In Table~\ref{tab:summary_Al10_20_-9} we summarise the performance metrics
of various strategies for $\tau = 10^{-9}$ and a GMRES restart size $m=20$.
The CG tolerances
$\taucg_{i,n\bm k}$ are set according to Table~\ref{tab:strategies}
and the relative efficiency $\releta$ is given against the \sdten strategy.
When employing a Kerker preconditioner (Section~\ref{sec:kerker})
in the inexact GMRES we prefix the corresponding strategy
by a \texttt{P}. For instance, \sPgrt refers to employing both the Kerker
preconditioner as well as the \sgrt strategy for selecting CG tolerances.
Note that for \sgrt and \sdnorm we only tested the preconditioned versions
as the non-preconditioned versions are too computationally expensive
without offering additional insights.

We observe our adaptive strategies (highlighted in blue) to consistently yield
higher final accuracy while requiring similar or fewer Hamiltonian applications compared
to baseline strategies. For example, switching from a naive \sdten
or \sdhun to our \sbal strategy speeds up the computation by a factor $1.5$
as illustrated by the relative efficiency row.
By large this is due the ability of our adaptive strategies to linearly
reduce the number of CG iterations to solve the SEs across the GMRES iterations
--- until just $1$ or $2$ CG iterations are needed in the final GMRES steps.
See \supplement \ref{supp:iterations}
for a numerical demonstration of this behaviour.
When combining adaptive CG tolerances with GMRES preconditioning,
this effect augments to saving almost a factor of $4$ compared to \sdten.
This illustrates Kerker preconditioning to bring noteworthy efficiency gains
for metallic systems, in agreement with Section~\ref{sec:kerker}.
With respect to the accuracy we observe that only the preconditioned adaptive
strategies \sPgrt and \sPbal achieve a true residual norm below the desired
tolerance $\tau = 10^{-9}$, while the non-preconditioned \sbal fails in doing so
--- again emphasising the need for appropriate preconditioning
when dropping the Hartree-exchange-correlation kernel term $\| \bm{K} \bm{v}_i \|$
from our estimates (Section~\ref{sec:approx_kernel}).
Furthermore we notice the failure of all \sagr and baseline variants to achieve
the prescribed tolerance.
Overall \sPbal stays in the top $3$ (highlighted in orange) for all metrics
and achieves the desired tolerance in only insignificantly more
Hamiltonian applications compared to \sPdten
--- thus representing the best compromise in this example.

\begin{figure}
    \begin{center}
    \begin{overpic}[width=0.9\textwidth]{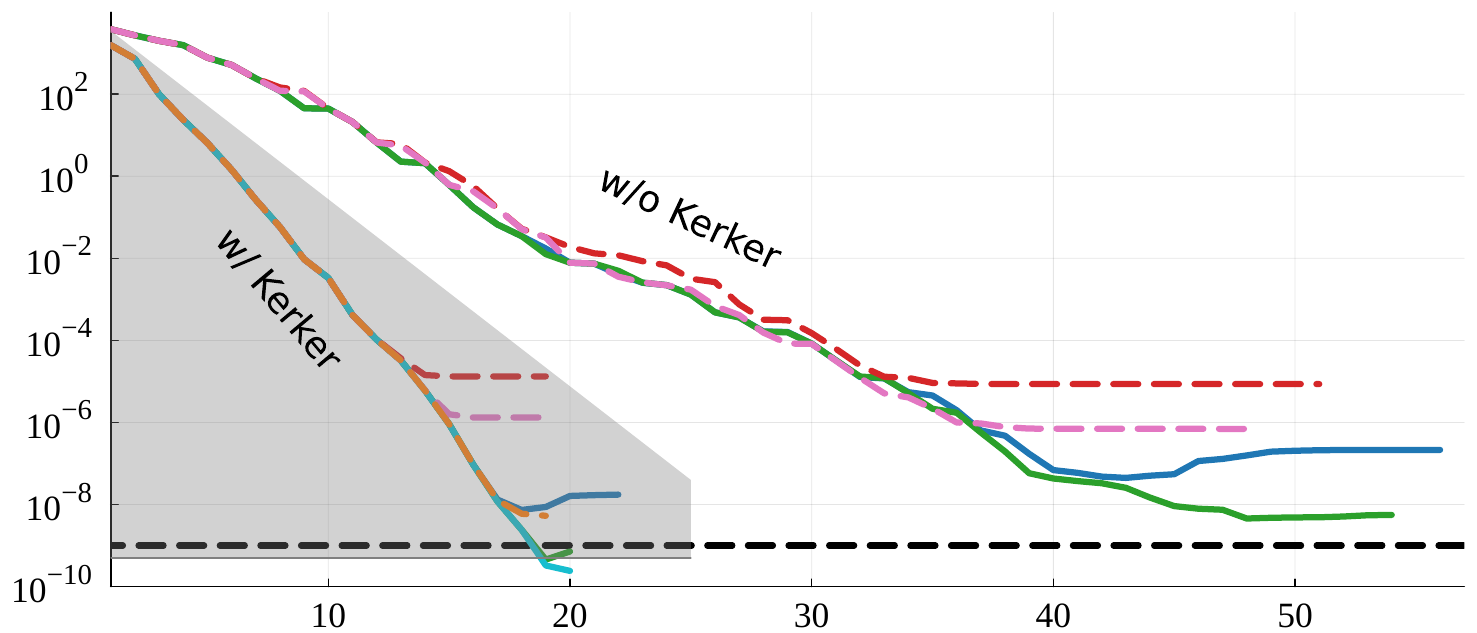}
    \put(45, 33) {\includegraphics[width=7cm]{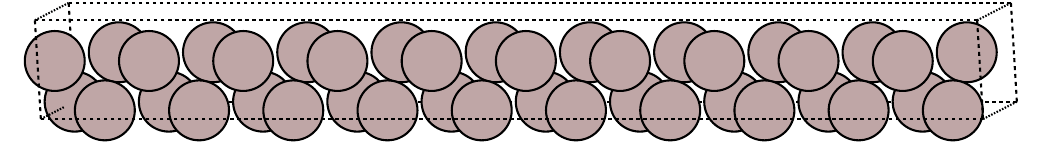}}
    \put(35,-2.5){GMRES iteration number $i$}
    \put(-1.5,10){\rotatebox{90}{true residuals $\| {\bm r}_i \|$}}
    \put(50,42){(A)}
  \end{overpic}
  \\[4mm]
  \begin{overpic}[width=0.9\textwidth]{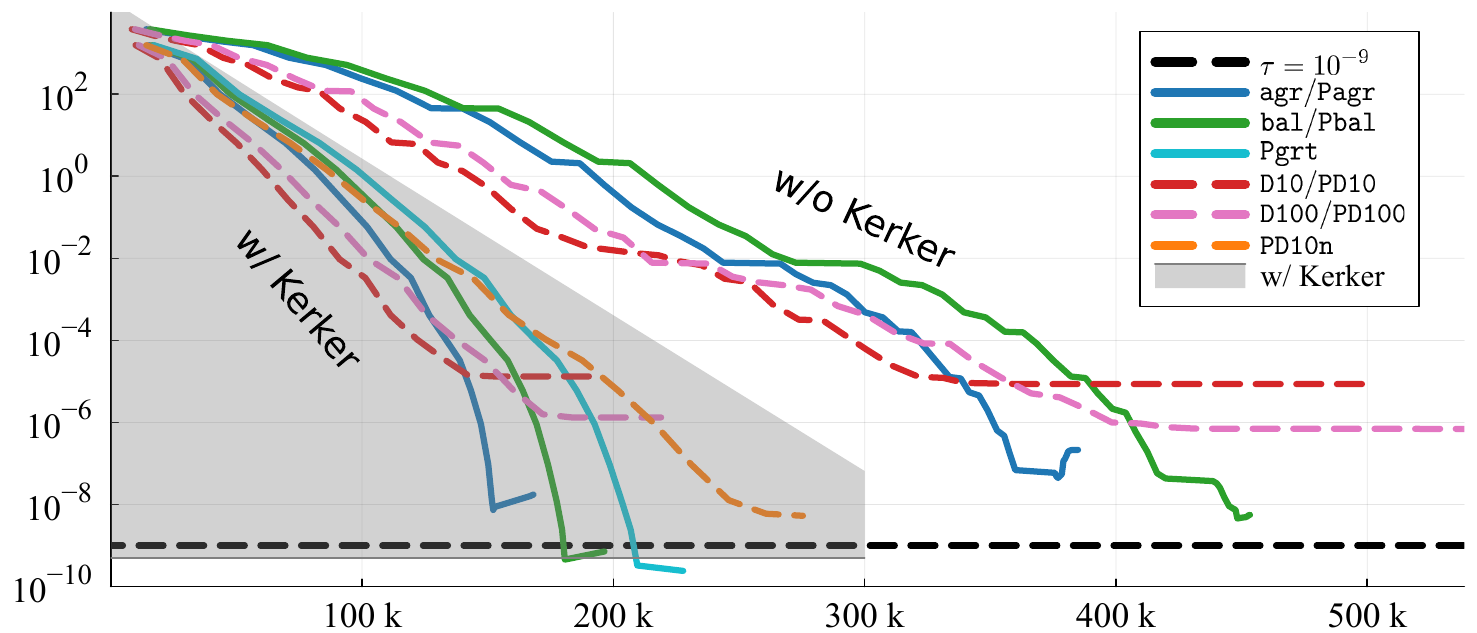}
    \put(25,-2.5){\# Hamiltonian applications (CG iterations)}
    \put(-1.5,10){\rotatebox{90}{true residuals $\| {\bm r}_i \|$}}
    \put(50,38){(B)}
  \end{overpic}
    \end{center}
  \vspace*{2mm}
  \caption{\textbf{Aluminium supercell:}
  True residual norms $\|\bm r_i\|$ v.s. (A) number of GMRES iterations,
  (B) accumulated number of Hamiltonian applications.
  The shaded areas indicate the strategies with Kerker preconditioning.}
  \label{fig:convergence_Al40_new}
\end{figure}

This impression of our proposed adaptive CG tolerance strategies
is confirmed when considering
Figures~\ref{fig:convergence_Al40_new} (A) and (B),
displaying the convergence of the true residuals
of the GMRES method versus the GMRES iteration number $i$
and the accumulated number of Hamiltonian applications.
In particular for our strategies (solid lines) Figure~\ref{fig:convergence_Al40_new} (A)
shows that the convergence rate of the outer GMRES iterations is not
significantly affected by using inexact matrix-vector products $\bm{\mathcal E} \bm v$
until the final stages of the iterations.
Most notably the \sPbal and \sPgrt strategies (i.e. employing appropriate preconditioning)
keep a steady linear convergence wrt.~$i$ until the desired tolerance $\tau$ has been reached.
Finally Figure \ref{fig:convergence_Al40_new} (B) confirms the superlinear convergence
of our adaptive strategies with respect to the number of Hamiltonian applications
and thus the accumulated computational cost
--- as expected from Remark~\ref{rmk:superlinear_convergence}.
Again \sPbal and \sPgrt emerge as the best compromises between accuracy and cost.

We remark that the trends and overall conclusions of this experiment are
stable with respect to target accuracy $\tau$, Krylov size $m$ or the
size of the aluminium system as a more exhaustive study presented
in \supplement \ref{supp:aluminium_parameters} illustrates.

\subsection{Targeting a complex material: Heusler alloy}\label{sec:heusler}
We consider the \linebreak \ch{Fe2MnAl} transition-metal alloy
as an example for a condensed matter system with a considerably more challenging
electronic structure than aluminium.
It is an example of a Heusler alloy, a class of materials,
which are
of practical interest due to their rich and unusual magnetic and electronic properties.
For example, \ch{Fe2MnAl} is a half-metal, which loosely speaking
means that depending on the spin degree of freedom the material behaves
either like a metal or like an insulator.
This behaviour is quite susceptible to the external conditions,
such that the electronic density of the material can change drastically
under a small perturbing potential $\delta V_0$. Numerically this makes
the Dyson equation more challenging to solve. %
See \cite{herbstBlackboxInhomogeneousPreconditioning2020}
and reference therein for more details as well as an analysis of the SCF
convergence on such systems.

For modelling the \ch{Fe2MnAl} alloy we employ the parameters of Section \ref{sec:setup}
as well as $\Ecut = 45$ Ha, Gaussian smearing with temperature $T = 10^{-2}$ Ha
and a $13 \times 13 \times 13$ $\bm{k}$-point grid.
This results in $\Ng = 250\,000$, $\Nocc = 24\,612$ and $\Nb \approx 4\,820$.
For the inexact GMRES we employ a restart size of $m=10$
as well as a Kerker preconditioner in order to capture the
Coulomb divergence in the metallic channel of this material.
\begin{table}
  \centering
  \smaller
  \begin{tabular}{c|cccc|cccccc}
    \toprule
    Strategy         & \textcolor{blue}{\sagr}    & \textcolor{blue}{\sbal}    & \sdten    & \sdhun    & \textcolor{blue}{\sPagr}    & \textcolor{blue}{\sPbal}   & \textcolor{blue}{\sPgrt}    & \sPdten    & \sPdhun   & \sPdnorm \\
    \midrule
    $\|\rend\|$     & 3e-8        & 4e-9        & 2e-6        & 1e-7        & 1e-8        & \textcolor{orange}{8e-10}  & \textcolor{orange}{3e-10}  & 2e-6        & 1e-7        & \textcolor{orange}{1e-9}   \\
    $\Nham$      & \textcolor{orange}{12\,M} & 15\,M      & 21\,M      & 22\,M      & \textcolor{orange}{10\,M} & \textcolor{orange}{12\,M} & 15\,M      & 16\,M      & 16\,M      & 20\,M      \\
    $\releta$ & \textcolor{orange}{2.06}             & 1.85                  & 1.00                  & 1.09                  & \textcolor{orange}{2.54}             & \textcolor{orange}{2.33}             & 1.96             & 1.33                  & 1.47                  & 1.42                  \\
    \bottomrule
  \end{tabular}
  \caption{\textbf{Heusler:} Accuracy of the returned solution $\|\rend\|$,
  total number of Hamiltonian applications $\Nham$ and relative efficiency
  $\releta$ (referencing \sdten) for the Heusler system with $\tau=10^{-9}$.
  Adaptive strategies are highlighted in blue, and the top three
  strategies for each metric are highlighted in orange.}
  \label{tab:summary_H_10_-9}
\end{table}
\begin{figure}
    \begin{center}
      \begin{overpic}[width=0.9\textwidth]{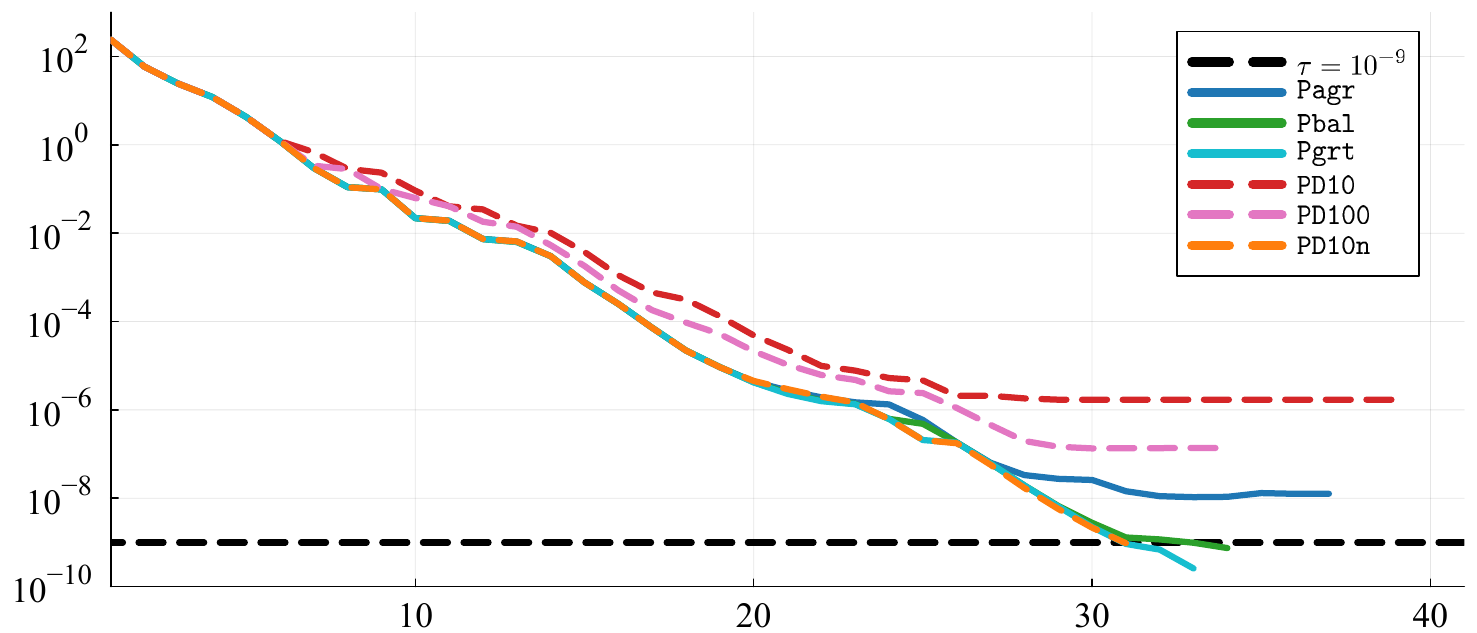}
          \put(47, 27) {\includegraphics[width=3.8cm]{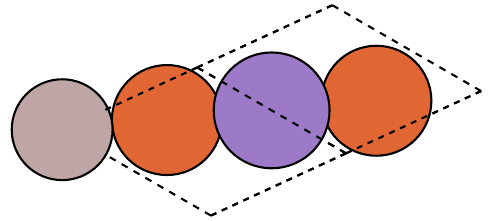}}
      \put(35,-2.5){GMRES iteration number $i$}
      \put(-1.5,10){\rotatebox{90}{true residuals $\| {\bm r}_i \|$}}
    \end{overpic}
    \end{center}
    \vspace*{2mm}
    \caption{\textbf{Heusler:} True residual norm $\|\bm r_i\|$ vs.~%
        GMRES iteration $i$ %
        for the \ch{Fe2MnAl} Heusler system.
        The inset shows the structure of \ch{Fe2MnAl}
        with iron (Fe) in orange, manganese (Mn) in violet
        and aluminium (Al) in grey.}
    \label{fig:convergence_H}
\end{figure}
We solve the Dyson equation until a tolerance of $\tau = 10^{-9}$
using a selection of strategies for selecting the CG tolerance,
see Table~\ref{tab:summary_H_10_-9} and Figure~\ref{fig:convergence_H}.
Comparing to the aluminium system the problem sizes
of the Dyson and Sternheimer equations ($\Ng$ and $\Nb$) are much smaller.
Still, the difficulty of this system is clearly visible
from the total number of Hamiltonian applications required to reach convergence
as well as the inability of the baseline strategies \sPdten and \sPdhun
to obtain accurate solutions --- with final true residuals being
two orders %
larger than the requested accuracy $\tau$.
For these two baseline strategies we also notice an early deviation
from ideal convergence in Figure~\ref{fig:convergence_H}.
In contrast our adaptive strategies \sPbal and \sPgrt stay close to
the baseline \sPdnorm accuracy
and keep a steady linear convergence.
We further remark that also for this Heusler system we observe
a superlinear convergence in the total number of Hamiltonian applications
for adaptive strategies similar to Figure~\ref{fig:convergence_Al40_new} (B).

For this system our \sPbal strategy again provides the best compromise.
It achieves both the desired accuracy of $\tau = 10^{-9}$
while at the same time requiring about 40\%
Hamiltonian applications less compared
to the equally accurate \sPdnorm baseline strategy
--- in absolute numbers a saving of about $8$ million Hamiltonian applications.
This demonstrates our adaptive CG tolerance strategies
to be reliably applicable also for challenging material systems.

\subsection{Semiconductors: Silicon}
We consider a simple silicon primitive unit cell
with two silicon atoms to demonstrate our adaptive CG tolerance strategies
to be applicable to semiconductors and insulators.
For the silicon systems we consider a $8 \times 8 \times 8$ $\bm k$-point grid
with $\Ecut = 40$ Ha and no smearing.
This results in $\Ng = 91\,125$, $\Nocc = 2\,048$ and $\Nb \approx 3\,020$.
In the context of SCF methods it is well known that for
a semiconducting material a Kerker preconditioner should not be employed
to obtain best convergence,
see e.g.~the discussion in~\cite{herbstBlackboxInhomogeneousPreconditioning2020}.
In line with Section~\ref{sec:kerker}
we thus also refrain from preconditioning the inexact GMRES for silicon.
For a GMRES convergence tolerance $\tau = 10^{-9}$
this results in the performance metrics of Table~\ref{tab:summary_silicon_20_-9},
whereas consistently worse results are obtained with Kerker preconditioning.
In agreement with the metallic systems our adaptive strategies turn out to be reliable,
all achieving a better accuracy than the prescribed tolerance $\tau = 10^{-9}$.
While \sagr and \sgrt turn out to be slightly less costly for this simple material,
\sbal still remains a good compromise saving about $30\%$ compared to \sdnorm
--- as the only baseline strategy also achieving the accuracy $\tau$.

\begin{table}
  \centering
  \smaller
  \begin{tabular}{c|cccccc}
    \toprule
    Strategy         & \textcolor{blue}{\sagr}    & \textcolor{blue}{\sbal}    & \textcolor{blue}{\sgrt}    & \sdten    & \sdhun    & \sdnorm \\
    \midrule
    $\|\rend\|$     & 8e-10       & \textcolor{orange}{3e-10}  & \textcolor{orange}{2e-10}  & 3e-8        & 3e-9        & \textcolor{orange}{2e-10}  \\
    $\Nham$      & \textcolor{orange}{263\,k} & \textcolor{orange}{301\,k} & \textcolor{orange}{300\,k} & 340\,k      & 379\,k      & 436\,k      \\
    $\releta$ & \textcolor{orange}{1.49}             & \textcolor{orange}{1.37}             & \textcolor{orange}{1.37}             & 1.00                  & 0.99                  & 0.95                  \\
    \bottomrule
  \end{tabular}
  \caption{\textbf{Silicon:} Accuracy of the returned solution $\|\rend\|$,
  total number of Hamiltonian applications $\Nham$ and relative efficiency
  $\releta$ (referencing \sdten) for silicon with $\tau=10^{-9}$.
  Adaptive strategies are highlighted in blue, and the top three
  strategies for each metric are highlighted in orange.}
  \label{tab:summary_silicon_20_-9}
\end{table}

\section{Conclusions}
\label{sec:conclusions}

The efficient simulation of linear response properties in density functional theory
requires the iterative solution of the Dyson equation.
This is a nested linear problem, in which each application of the operator
in turn requires solving a sizeable number of Sternheimer equations~(SE) iteratively.
In this work, we have performed an error analysis for the numerical solution of the
Dyson equation in plane-wave basis sets, quantifying rigorously the error propagation
from the approximate SE solutions to the final Dyson solution.
Based on this analysis and several physically motivated modifications, we have proposed
a set of strategies for adaptively selecting the convergence tolerance $\taucg_{i,n}$
when solving the SEs.
When employing an inexact GMRES framework to solve the Dyson equation
this allowed us to robustly tune the tightness of SE solution in each iteration
of the outer GMRES solver.
In non-trivial test cases on practically relevant materials systems
this enabled our recommended \sbal (balanced) strategy to obtain
a reduction of computational time by about 40\%
while maintaining the accuracy.
This is in contrast to baseline strategies, which feature a heuristically tuned
constant SE tolerance and which we could demonstrate to frequently fail
to achieve a sufficiently accurate solution.
Our framework combines well with standard preconditioning strategies in the field.
For example in combination with the
Kerker preconditioner (commonly employed in self-consistent field methods)
we could obtain an additional 20\% reduction of the cost
for solving the Dyson equation in metallic systems.

Overall this work presents a first approach for solving the Dyson
equation employing exclusively Krylov subspace methods.
Making use of the inexact Krylov subspace formalism our work offers
a reliable and efficient black-box solver
for DFPT without the need for any heuristic tuning of tolerances.
With the obtained efficiency gains our strategies thus provide
a viable alternative to the more wide-spread approaches based on
accelerated fixed-point methods~\cite{gonze1995adiabatic,baroni2001phonons,Gonze1997}
in the field.
However, in contrast to the latter Krylov-subspace methods are mathematically
well-studied, such that there are several promising directions to extend
the foundations of this work. 
First, when requiring only a low-accuracy SE solution
one could employ lower-precision arithmetics to provide further speedups
on modern GPU-based compute architectures.
Second, since the SEs form a sequence of closely related
linear systems, Krylov subspace recycling techniques could be explored.
Finally, our framework can be easily extended to the perturbation theory of other
mean-field models, such as Hartree-Fock or Gross-Pitaevskii equations, where
the Dyson equation is replaced by an alternative linearisation of the
underlying nonlinear eigenvalue problem.

\ifarXiv
    \appendix
    \section{Proof of inexact GMRES lemmas in Section~\ref{sec:inexact_gmres}}
\label{supp:proofs}
\begin{proof}[Proof of Lemma~\ref{lem:inexact_residual_norm}]
  By the inexact Arnoldi decomposition \eqref{eq:inexact_arnoldi_decomposition} and the definition of $\bm E_i = \widetilde{\bm{\mathcal E}}^{(i)} - \bm{\mathcal E}$, we have
  \begin{equation}
    \bm{\mathcal E} \bm V_m + \left[ \bm E_1 \bm v_1, \bm E_2 \bm v_2, \cdots, \bm E_m \bm v_m \right] = \bm V_{m+1} \bm H_m.
  \end{equation}
  Therefore, we can write the true residual $\bm r_m$ as
  \begin{equation}
      \begin{aligned}
          \bm r_m &= \bm b - \bm{\mathcal E} \bm x_m = \bm b - \bm{\mathcal E} \bm x_0 - \bm{\mathcal E} \bm V_m \bm y_m \\
          & = \bm b - \bm{\mathcal E} \bm x_0 - \bm V_{m+1} \bm H_m \bm y_m + \left[ \bm E_1 \bm v_1, \bm E_2 \bm v_2, \cdots, \bm E_m \bm v_m \right] \bm y_m \\
          & =  \bm E_0 \bm x_0 + \bm V_{m+1} \left( \beta \bm e_1 - \bm H_m \bm y_m \right) + \left[ \bm E_1 \bm v_1, \bm E_2 \bm v_2, \cdots, \bm E_m \bm v_m \right] \bm y_m, \\
      \end{aligned}
  \end{equation}
  where for the last equality, we used the fact that
  $\bm V_{m+1}\bm e_1 = \bm v_1 = \bm r_0/\beta$
  and $\bm r_0 = \bm b - \widetilde{\bm{\mathcal E}}^{(0)} \bm x_0$
  due to the Arnoldi process. The result then follows from the definition
  of estimated residual $\widetilde{\bm r}_m = \beta \bm e_1 - \bm H_m \bm y_m$,
  the orthonormality of $\bm V_{m+1}$, and the triangle inequality.
\end{proof}

\begin{proof}[Proof of Lemma~\ref{lem:gmres_y_k_bound}]
  We give a simpler proof of this result compared to the one provided in~\cite{simoncini2003theory}.
  Note that
  \begin{equation}
      \begin{aligned}
          \left[ \bm y_m \right]_i &= \bm e_i^* \bm y_m = \bm e_i^* \bm H_m^{\dagger} \beta \bm e_1 = \bm e_i^* \bm H_m^{\dagger} \beta \bm e_1 - \bm e_i^* \bm H_m^{\dagger} \begin{bmatrix} \bm H_{i-1} \bm y_{i-1} \\ \bm 0 \end{bmatrix} \\
          & = \bm e_i^* \bm H_m^{\dagger} \begin{bmatrix} \beta \bm e_1 - \bm H_{i-1} \bm y_{i-1} \\ \bm 0 \end{bmatrix} = \bm e_i^* \bm H_m^{\dagger} \begin{bmatrix} \widetilde{\bm r}_{i-1} \\ \bm 0 \end{bmatrix},
      \end{aligned}
  \end{equation}
  where $\bm H_m^{\dagger}$ is the pseudoinverse of $\bm H_m$. Therefore 
  \begin{equation}
      \big| \left[ \bm y_m \right]_i \big|
      \leq \| e_i^* \bm H_m^{\dagger} \| \, \| \widetilde{\bm r}_{i-1} \|
      \leq \dfrac{1}{\sigma_{m}(\bm H_m)} \| \widetilde{\bm r}_{i-1} \|.
  \end{equation}
\end{proof}

\section{Bound prefactor without eigenvalue gaps}
\label{supp:approx_gap}
\begin{figure}
  \centering
   \begin{overpic}[width=0.9\textwidth]{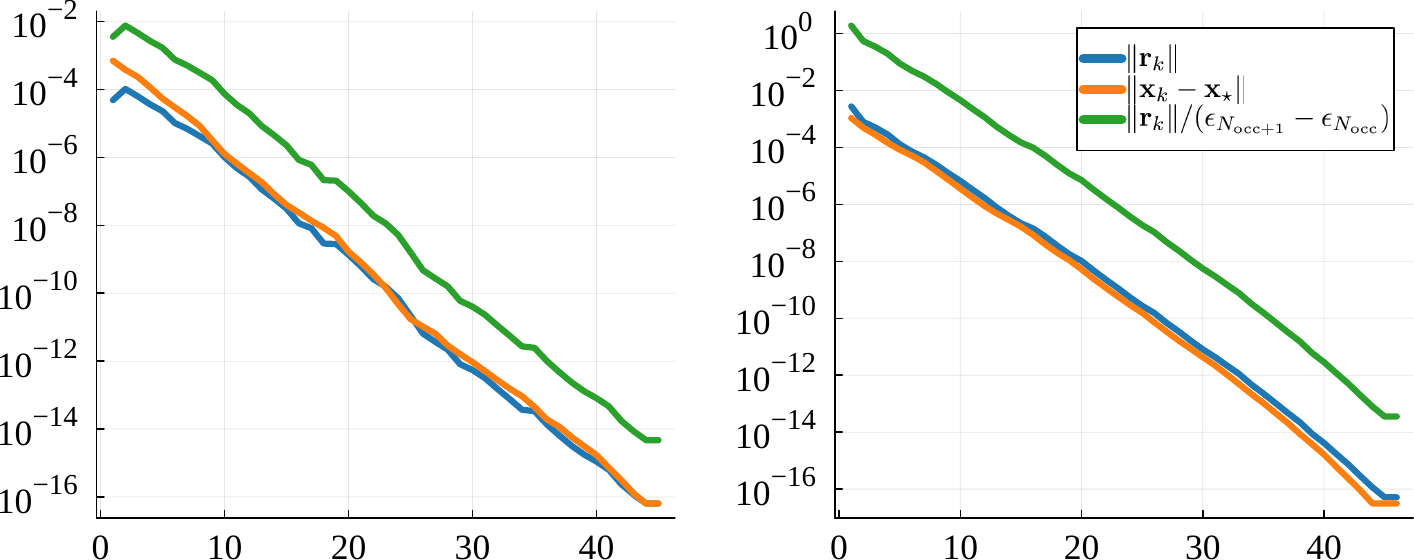}
      \put(20,38.8){(A) Aluminium}
      \put(68,38.8){(B) Heusler system}
      \put(36,-2.7){CG iteration number $k$}
      \put(-5,1){\rotatebox{90}{residuals $\| \bm{r}_{n,k}^{(i)} \|$ or $\ell_2$ errors $\| \bm z_{n,k}^{(i)} \|$}}
    \end{overpic}
    \vspace{2mm}
    \caption{Residual norms $\| \bm{r}_{n,k}^{(i)} \|$, $\ell_2$ errors $\| \bm z_{n,k}^{(i)} \|$
     and error bounds $\| \bm{r}_{n,k}^{(i)} \|/(\epsilon_{\Nocc + 1} - \epsilon_{\Nocc})$
    v.s. CG iteration number $k$ for the worst conditioned
        Sternheimer equation of (A) the aluminium system ($\ell = 10$)
      and (B) the Heusler system described in Section~\ref{sec:experiments}.}
    \label{fig:cg_z_residual}
\end{figure}
In Section \ref{sec:approx_gap} of the main text we argue
that the eigenvalue gap term $(\epsilon_{\Nocc + 1} - \epsilon_n)$
in the prefactor $C_{i,n}$ of Theorem~\ref{thm:inexact_gmres_dyson}
can be discarded, i.e.~set to $1$ in practical calculations
provided that appropriate techniques such as preconditioning
or the Schur complement approach~\cite{cances2023numerical} are employed.
Indeed, this can be demonstrated numerically, see Figure~\ref{fig:cg_z_residual}.
In this plot we show the $\ell_2$ norm of the error 
$\bm{z}_{n,k}^{(i)} = {\bm{\delta \phi^Q}_{n,k}}^{(i)} - \widetilde{\bm{\delta \phi^Q}_{n,k}}^{(i)}$
during CG iterations $k = 1, 2, \cdots, $ as well as the $\ell_2$ norm of the corresponding CG residual
$\bm{r}_{n,k}^{(i)} := \bm b_n - \bm A_n \widetilde{\bm{\delta \phi^Q}_{n,k}}^{(i)}$
and the residual divided by the gap. Here, the additional index $k$ denotes the CG iteration.
This is demonstrated for the Sternheimer equation
corresponding to the highest occupied eigenvalue $\epsilon_{\Nocc}$,
which is associated with the worst conditioning.
For (A) the aluminium supercell the bound closely follows the $\ell_2$ error
but sometimes undershoots while
for (B) the Heusler system the $\ell_2$ error is even bounded by the residual norm
--- see Section~\ref{sec:experiments} for more details on the setup of these
numerical experiments.

\section{Growth of Kernel-vector-product norm}
\label{supp:approx_kernel}
In Section \ref{sec:approx_kernel} of the main text we argue
that given appropriate preconditioning of the GMRES,
the Hartree-exchange-correlation kernel term $\| \mathbf{K} \mathbf{v}_i \|$
can be dropped from the prefactor $C_{i,n}$ of Theorem~\ref{thm:inexact_gmres_dyson}.
Indeed, Figure~\ref{fig:Kv_system_size} shows for the aluminium supercell,
that the appropriate Kerker preconditioner enables to overcome
the quadratic increase of this term with system size,
i.e.~that bound \eqref{eq:bound_k_naive} is far too pessimistic.
\begin{figure}
  \centering
  \begin{overpic}[width=0.9\textwidth]{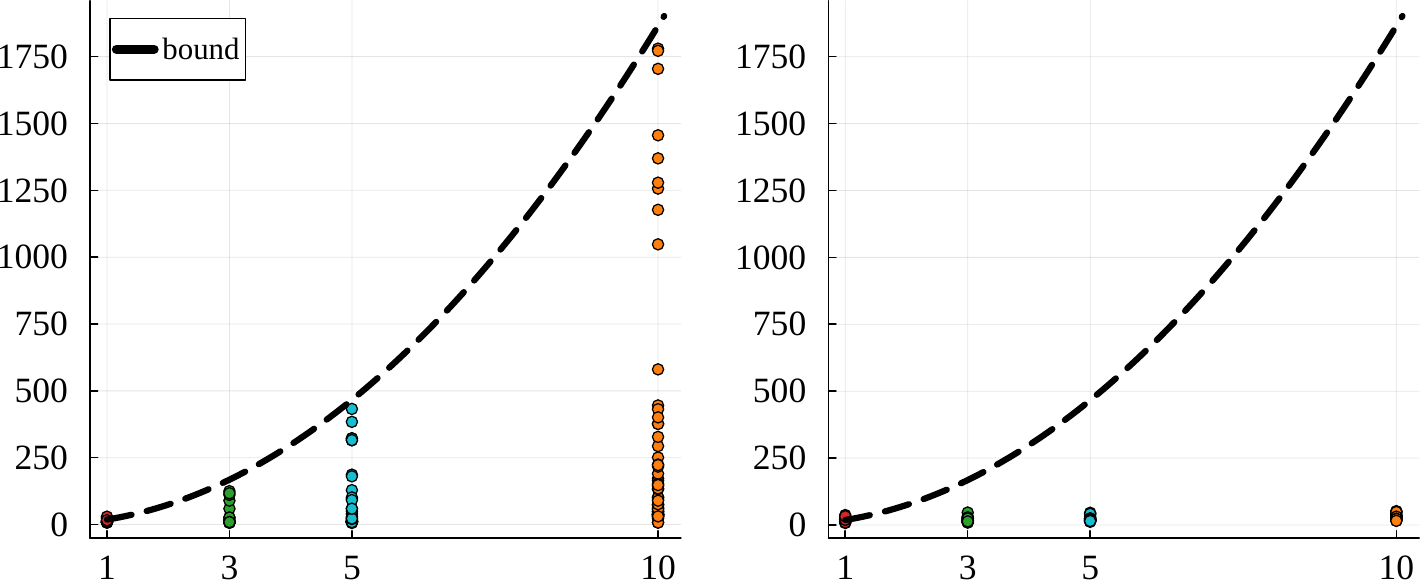}
    \put(20,38.8){(A) w/o Kerker}
    \put(70,38.8){(B) w/ Kerker}
    \put(16,-2.7){length of the Aluminium supercell $\ell$, i.e., relative size of $\|\bm a_1\|$}
    \put(-3.3,6){\rotatebox{90}{values or bound of $\| \bm K \bm v_i \|$}}
  \end{overpic}
  \vspace*{2mm}
  \caption{
  Values of $\| \bm K \bm v_i \|$ (circles) during the GMRES iterations for Aluminium supercell systems of different sizes
  (A) without and (B) with Kerker preconditioning.
  We employ the \sagr strategy to set the CG tolerances (see Table~\ref{tab:strategies})
  and use $\ell$ to denote the factor by which $\bm a_1$ grows relative to the smallest Aluminium system,
  see Section~\ref{sec:Al} for details on the computational setup.}
  \label{fig:Kv_system_size}
\end{figure}

\section{Variation of CG iterations and tolerance across GMRES steps}
\label{supp:iterations}
\begin{figure}
    \begin{center}
    \begin{overpic}[width=0.9\textwidth]{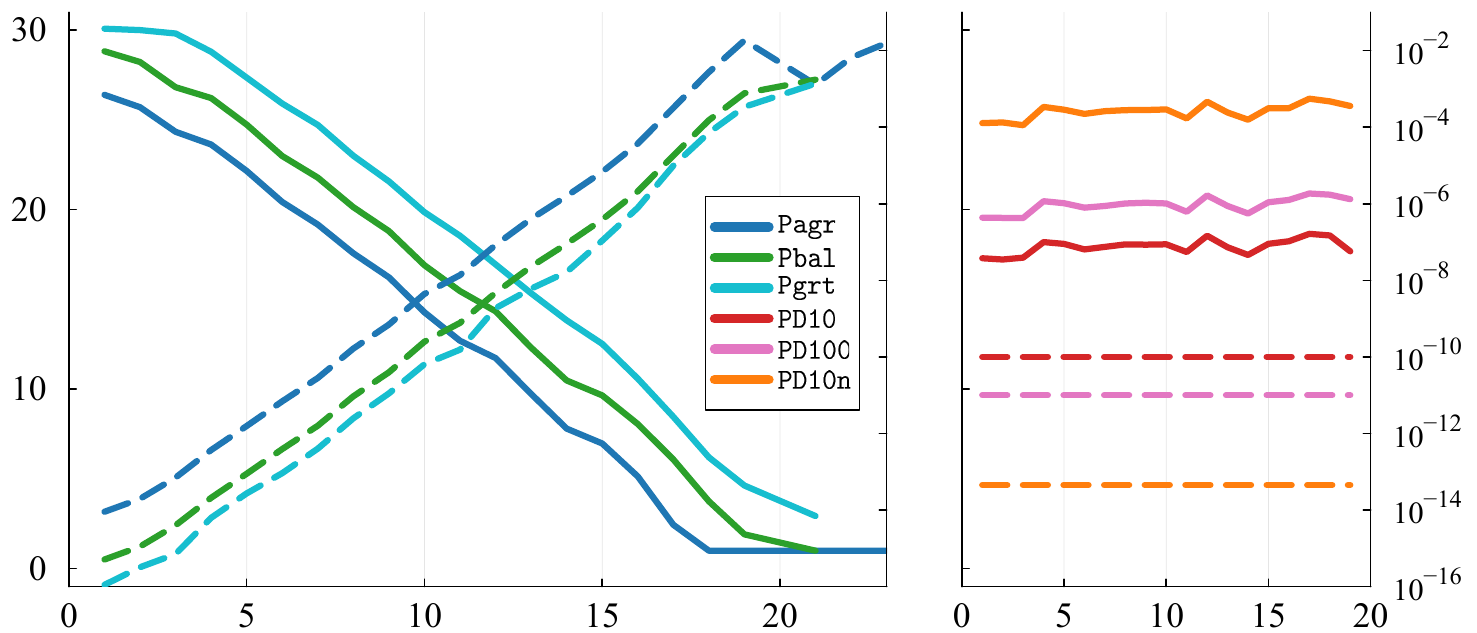}
      \put(36,-2.5){GMRES iteration number $i$}
      \put(-2.5,4){\rotatebox{90}{average \# CG iterations (solid)}}
      \put(100,41){\rotatebox{-90}{average CG tolerances (dashed)}}
    \end{overpic}
    \end{center}
    \vspace*{2mm}
    \caption{Average number of CG iterations (in solid lines)
        and geometric average of CG tolerances (in dashed lines) per GMRES iteration
        of the aluminium supercell system discussed in Section~\ref{sec:Al}.
    }
  \label{fig:cg_iter_tol_Al40}
\end{figure}
Figure~\ref{fig:cg_iter_tol_Al40} shows how the CG tolerances employed
for solving the Sternheimer equations evolve during the GMRES iterations.
With respect to averaging over all $(n, \bm k)$ we show both the
average tolerance $\taucg_{i,n\bm k}$ as well as the average number of CG iterations
per GMRES iteration $i$ for the preconditioned strategies.
Note that one Hamiltonian application is needed per CG iteration.
For the baseline strategies (right hand side) the CG tolerances are fixed constants
and thus the CG iterations remain almost constant across the GMRES iterations.
In contrast, our adaptive strategies (left hand side)
require looser CG tolerances as the GMRES progresses and notably,
initially employ tighter CG tolerances than the baselines,
but near convergence only require $1$ or $2$ CG iterations for each SE.
This behaviour suggests that the accuracy of the first few matrix-vector
products effectively limits the final solution accuracy,
which cannot be compensated by computing the matrix-vector product $\bm{\mathcal E} \bm v$
more precisely in later GMRES iterations.
\section{Testing size and GMRES parameter dependence}
\label{supp:aluminium_parameters}
With respect to the aluminium system already considered in Section~\ref{sec:Al}
we study the performance of the CG tolerance strategies from Table~\ref{tab:strategies}
as the system size, the Krylov subspace $m$ and the targeted GMRES tolerance $\tau$ are varied.
We keep the computational setup described in Section~\ref{sec:Al}
and consider aluminium supercells in which the $4$-atom cubic unit cell
is repeated $\ell$ times along the $x$ direction where $\ell = 1, 3, 5, 10$.
For $\ell = 10$ we thus obtain exactly the system studied before.
We use the same parameters as in Section~\ref{sec:Al} with the exception of the
unit cell $\ell=1$ where we set the $\bm k$-point grid to $3 \times 3 \times 3$.
Additionally, on the extremely simple unit cell system ($\ell=1$)
we do not test \sPgrt and \sPdnorm strategies.

We test GMRES restart sizes of $m=10$ and $20$, and do not consider smaller values.
Note that even storing $m=20$ Krylov vectors --- amounting to $m\Ng \approx 360 \Nb$
FP64 numbers --- is a small memory cost compared to the amount of storage required
for the occupied orbitals.
The latter are complex-valued and thus require $2 \Nocc \Nb \approx 1100 \Nb$
FP64 numbers for the largest system with $\ell = 10$.
For the GMRES tolerance, we focus on $\tau = 10^{-6}$ and $10^{-9}$, 
as other values are of limited relevance for practical applications.
Our results are summarised in Tables~\ref{tab:summary_Al_re},
\ref{tab:summary_Al_res} and \ref{tab:summary_Al_cg}
with an even more comprehensive study available in the file
\texttt{Al/data\_logs/summary\_tables.md} in our GitHub repository\footnote{\url{https://github.com/bonans/inexact_Krylov_response}}.
Similar studies for the Heusler system are also available in the file
\texttt{Fe2MnAl/data\_logs/summary\_tables.md} in the same repository.
Throughout this study our adaptive strategies consistently outperform the
baseline approaches with \sPbal providing the best compromise
between reliably achieving the desired tolerance $\tau$
and required number of Hamiltonian applications.

\begin{table}[H]
  \centering
  \begin{tabular}{c|ccc|ccc}
      \toprule
      $\ell$, $m$, $\tau$                  & \textcolor{blue}{\sPagr} & \textcolor{blue}{\sPbal} & \textcolor{blue}{\sPgrt} & \sPdten & \sPdhun & \sPdnorm \\
      \midrule
      \textcolor{white}{0}1, 10, $10^{-9}$ & \textcolor{orange}{1.63}        & \textcolor{orange}{1.55}         &  & 1.00          & 1.00           &              \\
      \textcolor{white}{0}3, 10, $10^{-9}$ & \textcolor{orange}{1.60}        & \textcolor{orange}{1.52}         & \textcolor{orange}{1.35}        & 1.00          & 1.00           & 0.99             \\
      \textcolor{white}{0}5, 10, $10^{-9}$ & \textcolor{orange}{1.47}        & \textcolor{orange}{1.51}         & \textcolor{orange}{1.35}        & 1.00          & 0.99           & 0.98             \\
      \midrule
      10, 10, $10^{-9}$                    & \textcolor{orange}{1.61}        & \textcolor{orange}{1.55}         & \textcolor{orange}{1.40}        & 1.00          & 1.00           & 0.98             \\
      10, 20, $10^{-9}$                    & \textcolor{orange}{1.53}        & \textcolor{orange}{1.47}         & \textcolor{orange}{1.32}        & 1.00          & 0.99           & 0.98             \\
      \midrule
      10, 10, $10^{-6}$                    & \textcolor{orange}{1.59}        & \textcolor{orange}{1.54}         & \textcolor{orange}{1.31}        & 1.00          & 1.04           & 0.98             \\
      10, 20, $10^{-6}$                    & \textcolor{orange}{1.43}        & \textcolor{orange}{1.39}         & \textcolor{orange}{1.26}        & 1.00          & 1.04           & 0.99             \\
      \bottomrule
  \end{tabular}
  \caption{Relative efficiency $\releta$ (referencing \sPdten) 
  for the aluminium systems with varied $\ell$, $m$ and $\tau$.
  Our adaptive strategies are highlighted in blue, 
  and the top three strategies for each metric are
  highlighted in orange.}
  \label{tab:summary_Al_re}
\end{table}
\begin{table}[H]
    \centering
    \footnotesize
    \begin{tabular}{c|ccc|ccc}
        \toprule
        $\ell$, $m$, $\tau$                  & \textcolor{blue}{\sPagr} & \textcolor{blue}{\sPbal}             & \textcolor{blue}{\sPgrt}                                  & \sPdten            & \sPdhun           & \sPdnorm                             \\
        \midrule
        \textcolor{white}{0}1, 10, $10^{-9}$ & $1.37 \!\cdot\! 10^{-9}$        & \textcolor{orange}{$2.96 \!\cdot\! 10^{-10}$} &  & $1.43 \!\cdot\! 10^{-8}$ & $1.19 \!\cdot\! 10^{-9}$ &  \\
        \textcolor{white}{0}3, 10, $10^{-9}$ & $1.08 \!\cdot\! 10^{-8}$        & \textcolor{orange}{$7.73 \!\cdot\! 10^{-10}$}                    & \textcolor{orange}{$3.27 \!\cdot\! 10^{-10}$}                    & $2.46 \!\cdot\! 10^{-7}$ & $2.06 \!\cdot\! 10^{-8}$ & \textcolor{orange}{$6.92 \!\cdot\! 10^{-10}$}                    \\
        \textcolor{white}{0}5, 10, $10^{-9}$ & $1.12 \!\cdot\! 10^{-8}$        & \textcolor{orange}{$5.99 \!\cdot\! 10^{-10}$}                    & \textcolor{orange}{$1.36 \!\cdot\! 10^{-10}$}                    & $8.01 \!\cdot\! 10^{-7}$ & $7.63 \!\cdot\! 10^{-8}$ & \textcolor{orange}{$1.32 \!\cdot\! 10^{-9\textcolor{white}{0}}$} \\
        \midrule
        10, 10, $10^{-9}$                    & $4.34 \!\cdot\! 10^{-8}$        & \textcolor{orange}{$1.50 \!\cdot\! 10^{-9\textcolor{white}{0}}$} & \textcolor{orange}{$1.94 \!\cdot\! 10^{-10}$}                    & $8.69 \!\cdot\! 10^{-6}$ & $6.96 \!\cdot\! 10^{-7}$ & \textcolor{orange}{$3.16 \!\cdot\! 10^{-9\textcolor{white}{0}}$} \\
        10, 20, $10^{-9}$                    & $1.74 \!\cdot\! 10^{-8}$        & \textcolor{orange}{$7.17 \!\cdot\! 10^{-10}$}                    & \textcolor{orange}{$2.41 \!\cdot\! 10^{-10}$}                    & $1.33 \!\cdot\! 10^{-5}$ & $1.33 \!\cdot\! 10^{-6}$ & \textcolor{orange}{$5.32 \!\cdot\! 10^{-9\textcolor{white}{0}}$} \\
        \midrule
        10, 10, $10^{-6}$                    & $7.03 \!\cdot\! 10^{-5}$        & \textcolor{orange}{$1.12 \!\cdot\! 10^{-6\textcolor{white}{0}}$} & \textcolor{orange}{$2.41 \!\cdot\! 10^{-7\textcolor{white}{0}}$} & $8.54 \!\cdot\! 10^{-3}$ & $7.44 \!\cdot\! 10^{-4}$ & \textcolor{orange}{$4.12 \!\cdot\! 10^{-6\textcolor{white}{0}}$} \\
        10, 20, $10^{-6}$                    & $2.98 \!\cdot\! 10^{-5}$        & \textcolor{orange}{$7.71 \!\cdot\! 10^{-7\textcolor{white}{0}}$} & \textcolor{orange}{$6.38 \!\cdot\! 10^{-8\textcolor{white}{0}}$} & $1.32 \!\cdot\! 10^{-2}$ & $1.09 \!\cdot\! 10^{-3}$ & \textcolor{orange}{$6.29 \!\cdot\! 10^{-6\textcolor{white}{0}}$} \\
        \bottomrule
    \end{tabular}
    \caption{Accuracy of the returned solution
    $\|\rend\|$ for the aluminium systems with varied $\ell$, $m$ and $\tau$.
    Our adaptive strategies are highlighted in blue, 
  and the top three strategies for each metric are
  highlighted in orange.}
    \label{tab:summary_Al_res}
\end{table}
\begin{table}[H]
  \centering
  \begin{tabular}{c|ccc|ccc}
      \toprule
      $\ell$, $m$, $\tau$                  & \textcolor{blue}{\sPagr}                 & \textcolor{blue}{\sPbal}                & \textcolor{blue}{\sPgrt}                 & \sPdten                                   & \sPdhun               & \sPdnorm            \\
      \midrule
      \textcolor{white}{0}1, 10, $10^{-9}$ & \textcolor{white}{0}\textcolor{orange}{17\,k} & \textcolor{white}{0}\textcolor{orange}{19\,k} &  & \textcolor{white}{0}25\,k                     & \textcolor{white}{0}28\,k  &  \\
      \textcolor{white}{0}3, 10, $10^{-9}$ & \textcolor{white}{0}\textcolor{orange}{35\,k} & \textcolor{white}{0}\textcolor{orange}{41\,k} & \textcolor{white}{0}\textcolor{orange}{47\,k} & \textcolor{white}{0}49\,k                     & \textcolor{white}{0}55\,k  & \textcolor{white}{0}63\,k \\
      \textcolor{white}{0}5, 10, $10^{-9}$ & \textcolor{white}{0}\textcolor{orange}{80\,k} & \textcolor{white}{0}\textcolor{orange}{86\,k} & 101\,k                                        & \textcolor{white}{0}\textcolor{orange}{97\,k}                     & 109\,k                     & 130\,k                    \\
      \midrule
      10, 10, $10^{-9}$                    & \textcolor{orange}{183\,k}                    & \textcolor{orange}{215\,k}                    & 253\,k                                        & \textcolor{orange}{233\,k}                    & 263\,k                     & 332\,k                    \\
      10, 20, $10^{-9}$                    & \textcolor{orange}{168\,k}                    & \textcolor{orange}{196\,k}                    & 228\,k                                        & \textcolor{orange}{194\,k}                    & 219\,k                     & 275\,k                    \\
        \midrule
      10, 10, $10^{-6}$                    & \textcolor{white}{0}\textcolor{orange}{96\,k} & \textcolor{orange}{121\,k}                    & 152\,k                                        & \textcolor{orange}{113\,k}                    & 127\,k                     & 180\,k                    \\
      10, 20, $10^{-6}$                    & \textcolor{orange}{101\,k}                    & 124\,k                                        & 152\,k                                        & \textcolor{white}{0}\textcolor{orange}{99\,k}                     & \textcolor{orange}{113\,k} & 159\,k                    \\
      \bottomrule
  \end{tabular}
  \caption{Total number of Hamiltonian applications $\Nham$ 
  Our adaptive strategies are highlighted in blue, 
  and the top three strategies for each metric are
  highlighted in orange.}
  \label{tab:summary_Al_cg}
\end{table}

\section{Adaptive tolerance selection when modelling perfect crystals}
\label{supp:perfect_crystals}
In section~\ref{sec:crystals} we discussed the extension of
density-functional perturbation theory to the case of perfect crystals.
The application of $\chi_0$ to a perturbation $\delta V$ is shown
in equation \eqref{eq:bzchi0}, which in the discretised setting is written as
\begin{equation}
    \label{eq:kchi0}
  \bm \chi_0 \bm{\delta V}
  = \sum_{\bm k =1}^{\Nmp} w_{\bm k} \sum_{n=1}^{\Nocck} 2 f_{n\bm k} \Real \Big( (\bm F_{\bm k}^{-1}\bm{\phi}_{n\bm k})^* \odot ( \bm F_{\bm k}^{-1} \bm{\delta \phi}_{n\bm k}) \Big) + \delta f_{n\bm k} \, \left| \bm F_{\bm k}^{-1} \bm{\phi}_{n \bm k} \right|^2.
\end{equation}
Here, we explicitly exploit crystal symmetry,
which enables to reduce the original sum over all $L = |\mpgrid|$ $\bm k$-points
to a sum over the $\Nmp \leq L$ unique irreducible $\bm k$-points
and with a slight abuse of notation we further employ the integers
$1$ to $\Nmp$ to label the irreducible $\bm k$-points.
As a result of the exploitation of symmetry we further need to introduce
an appropriate weight factor $w_{\bm k}$,
which equals to the size of the orbit of $\bm k$
(number of symmetry-equivalent $\bm k$-points generated by this irreducible $\bm k$-point)
divided by $L$. We also recall that the Fourier
transforms differ across $\bm k$-points, see the first paragraph 
of section \ref{sec:Al} for a more detailed description.

To obtain the modified expressions for the prefactor
$C_{i,n {\bm k}}$ we modify the proof of Lemma \ref{lem:cgtol_E}
when employing the expression \eqref{eq:kchi0} for $\bm \chi_0 \bm{\delta V}$.
Let $\bm{\Phi}_{\bm k} = [\bm{\phi}_{1 \bm k}, \cdots,\allowbreak \bm{\phi}_{\Nocck \bm k}]$ and $\bm{Y}_{\bm k} = [f_{1 \bm k} \bm z_{1 \bm k}^{(i)}, \cdots, f_{\Nocck \bm k} \bm z_{\Nocck \bm k}^{(i)}]$.
Then, the equivalent expression to
equation \eqref{eq:dielectricdiff} becomes
\begin{equation}
      \begin{aligned}
          \| (\bm{\mathcal E} - \widetilde{\bm{\mathcal E}}^{(i)}) \bm{v}_i\| &= 2 \left\| \bm K \bm{v}_i\right\| \left\| \sum_{\bm k =1}^{\Nmp} w_{\bm k} \sum_{n=1}^{\Nocck} f_{n\bm k} \Real \left( \left(\bm F_{\bm k}^{-1} \bm{\phi}_{n \bm k}\right)^* \odot \left( \bm F_{\bm k}^{-1} \bm{z}_{n\bm k}^{(i)} \right) \right) \right\|                                                             \\
          & = 2 \left\| \bm K \bm{v}_i\right\| \left\| \sum_{\bm k =1}^{\Nmp} w_{\bm k} \Real\left(\bm{F}_{\bm k}^{-1}\bm{\Phi}_{\bm k} \odot \bm{F}_{\bm k}^{-1}\bm{Y}_{\bm k}\right) \cdot \bm{1}_{\Nocck} \right\| \\
          & \leq 2 \left\| \, \bm K \bm{v}_i\right\| \sum_{\bm k =1}^{\Nmp} w_{\bm k}\|\bm{F}_{\bm k}^{-1}\bm{\Phi}_{\bm k}\|_{2,\infty} \, \|\bm{F}_{\bm k}^{-1}\bm{Y}_{\bm k}\|_{1,2} \sqrt{\Nocck}\\
          & \leq 2 w^{-1} \left\| \, \bm K \bm{v}_i\right\| \sum_{\bm k =1}^{\Nmp} w_{\bm k} \sqrt{\Nocck}\|\bm{F}_{\bm k}^{-1}\bm{\Phi}_{\bm k}\|_{2,\infty} \max_{1\leq n \leq \Nocck} f_{n \bm k} \|\bm{z}_{n\bm k}^{(i)}\|.
      \end{aligned}
\end{equation}
In Theorem \ref{thm:inexact_gmres_dyson} we thus replace the prefactor $C_{i,n}$ by
\begin{equation}
    C_{i,n\bm k} = \dfrac{\sqrt{|\Omega|}(\epsilon_{\Nocck + 1,\bm k} - \epsilon_{n\bm k})}{2 f_{n {\bm k}} w_{\bm k} \left\| \bm K \bm v_i \right\| \left\|\bm{F}_{\bm k}^{-1}\bm{\Phi}_{\bm k} \right\|_{2,\infty} \Nmp \sqrt{\Ng\Nocck} },
\end{equation}
where the $1 / \Nmp$ factor appears from an equal distribution of error
across the irreducible $\bm k$-points.
While the \sagr remains unchanged (all prefactors dropped)
the strategies \sgrt and \sbal should be changed accordingly
as shown in Table \ref{tab:strategies_k}.

\begin{table}
  \newcommand{\rhsterm}{$\dfrac{\sigma_m(\bm H_m)}{3m} \dfrac{ \tau}{\|\widetilde{\bm r}_{i-1}\|}$}
  \centering
  \smaller
  \begin{tabular}{crl}
    \toprule
    \multicolumn{3}{l}{Adaptive strategies (perfect crystals)\quad$\taucg_{i,n\bm k} = $} \\
    \midrule
    \sgrt     & $ \dfrac{1}{\left\| \bm K \bm v_i \right\| \left\| \Real(\bm{F}_{\bm k}^{-1}\bm{\Phi}_{\bm k}) \right\|_{2,\infty}}$
                $ \dfrac{\sqrt{|\Omega|}}{2 f_{n\bm k} w_{\bm k} \Nmp \sqrt{\Ng \, \Nocck}}$
                & \rhsterm \\[0.2em]
    \sbal     & $ \dfrac{\sqrt{|\Omega|}}{\sqrt{\Nocck}}$
    $ \dfrac{\sqrt{|\Omega|}}{2 f_{n\bm k} w_{\bm k} \Nmp \sqrt{\Ng \, \Nocck}}$
    & \rhsterm \\[0.2em]
    \bottomrule
  \end{tabular}
  \caption{Summary of strategies in case of perfect crystals.}
  \label{tab:strategies_k}
\end{table}

In our numerical tests we further make the reasonable assumption
that each $\Nocck$ is roughly the same and replace
$\Nocck \Nmp$ by $\Nocc := \sum_{\bm k}^{\Nmp} \Nocck$ for \sbal,
respectively 
$\sqrt{\Nocck} \Nmp$ by $\sqrt{\Nocc}$ for \sgrt
(dropping an additional $\sqrt{\Nmp}$).

\fi

\section*{Acknowledgments}
We would like to acknowledge fruitful discussions with Peter Benner,
Gaspard Kemlin and Xiaobo Liu.

\bibliographystyle{siamplain}
\bibliography{references}

\end{document}